\title[UMF of homeomorphism groups]{Universal minimal flows of homeomorphism groups of high-dimensional manifolds are not metrizable}
\author{Yonatan Gutman}
\address{Institute of Mathematics \\
  Polish Academy of Sciences \\
  ul. \'{S}niadeckich 8  \\
  00-656 Warszawa \\
  Poland}
\email{Y.Gutman@impan.pl}
\author{Todor Tsankov}
\address{
  Institut Camille Jordan \\
  Universit\'e Claude Bernard Lyon 1 \\
  Universit\'e de Lyon \\
  43, boulevard du 11 novembre 1918 \\
  69622 Villeurbanne \textsc{cedex} \\
  France
  -- and --
  Institut Universitaire de France}
\email{tsankov@math.univ-lyon1.fr}
\author{Andy Zucker}
\address{
  Institut de Math{\'e}matiques de Jussieu--PRG \\
  Universit\'e Paris Diderot, case 7012 \\
  8, place Aur{\'e}lie Nemours \\
  75205 Paris \textsc{cedex} 13 \\
  France}
\email{andrew.zucker@imj-prg.fr}
\date{October 2019}
\subjclass[2010]{Primary 54H20; Secondary 57N99}
\keywords{universal minimal flow, homeomorphism group, manifold, Hilbert cube, metrizablity}
\setlist[enumerate,1]{label=(\roman*), font=\normalfont}
\newcommand{\im}[1]{\mathrm{Im}(#1)}
\begin{document}

\begin{abstract}
Answering a question of Uspenskij, we prove that if $X$ is a closed manifold of dimension $2$ or higher or the Hilbert cube, then the universal minimal flow of $\operatorname{Homeo}(X)$ is not metrizable. In dimension $3$ or higher, we also show that the minimal $\operatorname{Homeo}(X)$-flow consisting of all maximal, connected chains in $X$ has meager orbits.
\end{abstract}

\maketitle

\section{Introduction}
\label{sec:introduction}

A central object in abstract topological dynamics is the \emph{universal minimal flow (UMF)} of a topological group $G$, often denoted by $M(G)$. It is a canonical dynamical system associated to $G$ that is defined abstractly as the minimal $G$-flow\footnote{A \emph{$G$-flow} is a continuous action of $G$ on a compact Hausdorff space; it is \emph{minimal} if all orbits are dense.} that admits any minimal $G$-flow as a factor. It is unique with this property, up to isomorphism (see \cite{gutman2013new} for a short proof). For most groups encountered in classical mathematics (for example, discrete and locally compact, non-compact groups), the UMF is a non-metrizable space that is difficult to describe explicitly. Yet somewhat surprisingly, for many important infinite-dimensional Polish groups, it is a rather concrete object. The first examples of this phenomenon were \emph{extremely amenable groups}, for which the UMF is a singleton. For some time, those examples were considered pathological until it was realized that they are ubiquitous and that their study has deep connections with combinatorics and probability theory. We recommend Pestov's book \cite{Pestov2006} as an introduction to the subject.

The metrizability of the universal minimal flow of a Polish group $G$ is a dividing line between well-behaved and wild dynamics for the collection of all minimal $G$-flows. If the UMF of $G$ is metrizable, it has a comeager orbit and can be represented as the completion of a homogeneous space $G/H$ for a suitably chosen closed subgroup $H \leq G$ (\cites{BenYaacov2017,Melleray2016}, see also \cite{Zucker2018} for a different proof). This implies that all minimal $G$-flows are metrizable and have a comeager orbit and, up to isomorphism, they are \emph{concretely classifiable} (that is, isomorphism classes can be represented in a concrete way as points in a Polish space) \cite{Melleray2016}*{Theorem~3.5}. In the other direction, the techniques of \cite{BenYaacov2017} and \cite{Zucker2018} allow to show that the UMF of a given group $G$ is not metrizable by studying a single, sufficiently rich, metrizable $G$-flow (for example, by showing that all of its orbits are meager, or, more generally, by considering its \emph{maximal highly proximal extension}).

Recently, Kwiatkowska~\cite{Kwiatkowska2018} found an example that shows that the universal minimal flow can have a comeager orbit while not being metrizable. Groups with this property form an intermediate class between those with metrizable UMF and those for which the UMF has meager orbits. For instance, analogously to the metrizable case, one can find a closed subgroup $H\leq G$ such that $M(G)$ is the \emph{Samuel compactification} of $G/H$ \cite{Zucker2020}. This means that $M(G)$ is determined by the behavior of some Polish (albeit non-compact) $G$-invariant subspace. In contrast, if $M(G)$ has all orbits meager, no $G$-invariant subspace of $M(G)$ is Polish (as can be seen using the techniques from \cite{BenYaacov2017}).

The first interesting, non-trivial, metrizable universal minimal flow of a Polish group was computed by Pestov~\cite{Pestov1998}, who proved that the UMF of the group $\Homeo^+(S^1)$ of orientation-preserving homeomorphisms of the circle equipped with the compact-open topology is the circle itself. This naturally led him to ask the question whether a similar result is true for homeomorphism groups of other manifolds. A few years later, Uspenskij~\cite{Uspenskij2000} proved that the action of a group on its UMF is never $3$-transitive, thus giving a negative answer to Pestov's question. His proof is short but quite remarkable and led to many interesting developments in the subject. Given an arbitrary flow $G \actson X$, Uspenskij considers the action of $G$ on the space $\Phi(X)$ of \emph{maximal chains of closed subsets of $X$} (viewed as a subspace of the double hyperspace of $X$). He then proceeds to prove that if the action $G \actson X$ is $3$-transitive, then there is no $G$-map from $X$ to this space of maximal chains, showing that $X$ is not universal. This construction is particularly interesting for homeomorphism groups as they are already equipped with a natural action on a compact space. Inspired by it, Glasner and Weiss~\cite{Glasner2003a} showed that the UMF of the homeomorphism group of the Cantor space $2^\N$ is isomorphic to $\Phi(2^\N)$ and more generally, Glasner and Gutman~\cite{GG2012universal} showed that the UMF of the homeomorphism group of a h-homogeneous zero-dimensional compact Hausdorff
space $X$ is isomorphic to $\Phi(X)$. Still, for higher-dimensional (sufficiently homogeneous) compact spaces, it remained an open problem whether the UMFs of their homeomorphism groups are metrizable and Uspenskij~\cite{Uspenskij2000} specifically asked if this is case for  the homeomorphism group of the Hilbert cube whereas Pestov~\cite{Pestov2006}*{Open Problem~6.4.13} and \cite{pestov2007forty}*{Questions ~22 \& 23} asked for an explicit description of the UMF of the homeomorphism groups of both closed finite-dimensional manifolds and the Hilbert cube.

If the space $X$ is connected, it is easy to see that $\Phi(X)$ is not minimal: indeed, the subspace $\cC(X)$ of $\Phi(X)$ consisting of \emph{connected chains} is non-empty, closed and invariant. The UMF of $\Homeo(X)$ has been computed for several one-dimensional spaces $X$: the interval, the circle \cite{Pestov1998}, and the Lelek fan (Barto\v{s}ová--Kwiatkowska~\cite{Bartosova2019}) and in all of these cases it can be realized as a subspace of $\cC(X)$ in a natural way. A particularly intriguing open question in dimension $1$, again asked by Uspenskij in \cite{Uspenskij2000}, is whether the UMF of the homeomorphism group of the pseudo-arc $\bP$ is metrizable and, in particular, whether it is isomorphic to $\bP$. Note that as $\bP$ is hereditarily indecomposable, $\cC(\bP) \cong \bP$ (see Section~\ref{sec:comeager-set-chains} for more discussion).

It turns out that the situation is dramatically different in higher dimensions. Recall that a subgroup $G \leq \Homeo(X)$ is called \df{locally transitive} if for every open $U \sub X$ and every $x \in U$, $G_U \cdot x$ contains a neighborhood of $x$. Here $G_U$ denotes the \df{rigid stabilizer} of $U$, i.e., the subgroup of all elements of $G$ that fix all points in   $X \sminus U$. For example, if $X$ is a closed manifold, its full homeomorphism group is locally transitive. It was shown by Gutman \cite{Gutman2008} that if $X$ is a closed manifold of dimension at least $2$ or the Hilbert cube and $G$ acts locally transitively on $X$, then the action $G \actson \cC(X)$ is minimal but not $1$-transitive. In the same article \cite{Gutman2008}*{Question 12.3} it was asked if the UMF of the homeomorphism group of a  closed manifold of dimension $3$ or higher or the Hilbert cube equals $\cC(X)$. Pestov~\cite{Pestov2006}*{Open Problem~6.4.13} remarked that $\cC(X)$ ``currently looks like a likely candidate to serve as the UMF'' in these cases.  In this paper, we analyze the flow $\cC(X)$ and using the results from \cite{BenYaacov2017} and \cite{Zucker2018}, we answer both Uspenskij's and Gutman's questions in the negative.

\begin{theorem}
  \label{th:i:dimension2}
  Let $X$ be a closed manifold of dimension at least $2$ or the Hilbert cube and let $G$ be a locally transitive subgroup of $\Homeo(X)$. Then the universal minimal flow of $G$ is not metrizable.
\end{theorem}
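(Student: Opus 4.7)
The plan is to apply the criterion from \cite{BenYaacov2017} and \cite{Zucker2018} discussed in the introduction: to establish that $M(G)$ is not metrizable, it suffices to exhibit a metrizable minimal $G$-flow whose maximal highly proximal extension fails to have a comeager orbit, and a convenient sufficient condition is that all orbits of this extension be meager. The natural candidate flow is $\cC(X)$, which by the cited theorem of Gutman is minimal but not $1$-transitive whenever $G$ is locally transitive.

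For dimension at least $3$ and for the Hilbert cube, I would aim to prove the stronger statement that $\cC(X)$ itself has all orbits meager. For any $c \in \cC(X)$, the orbit $G \cdot c$ is dense by minimality, so the question reduces to a dichotomy between meager and comeager. I would associate to each chain a $G$-equivariant local invariant at a distinguished chain element, taking continuum-many values with each level set nowhere dense. The geometric input is that in dimension $\geq 3$ the rigid stabilizers $G_U$ act with enough flexibility to distinguish inequivalent local configurations of a chain near a point; for the Hilbert cube the same conclusion should follow from its strong homogeneity properties and the infinite-dimensional analogue of the same construction.

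For dimension $2$, the flow $\cC(X)$ may in fact have a comeager orbit, so a direct meagerness argument is unavailable. Here I would pass to the maximal highly proximal extension $\widehat{\cC(X)}$, which remains a metrizable minimal $G$-flow but carries additional boundary/limit data attached to each chain element. One then shows that this extra data, considered up to the $G$-action, has continuum-many essentially different values, splitting a would-be comeager orbit in $\cC(X)$ into meager orbits in the extension. The criterion then applies to $\widehat{\cC(X)}$ and delivers non-metrizability of $M(G)$.

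The main obstacle, and technical heart of the argument, is the dimension-$2$ case: describing $\widehat{\cC(X)}$ concretely enough to produce the finer invariants, verifying that they are genuinely $G$-equivariant, and checking that their level sets are nowhere dense. This is reflected in the abstract's statement, where meager orbits of $\cC(X)$ are claimed only for dimension $\geq 3$; in dimension $2$, passage to the MHP extension appears to be genuinely needed, and in higher dimensions the direct argument is cleaner because one has more room in which to deform chains locally.
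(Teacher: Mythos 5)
Your high-level framing agrees with the paper (analyze the minimal metrizable flow $\cC(X)$; show meager orbits in dimension $\geq 3$; in dimension $2$ use the maximal highly proximal extension), but the proposal stops exactly where the actual work begins, and the dimension-$2$ plan as stated would not work. The heart of the paper's argument is a concrete, quantitative invariant: the \emph{winding number} of arcs around a fixed annulus inside one Euclidean chart $Z \sub X$. One shows (i) that the winding number of an arc is essentially determined by any sufficiently fine tube, simply connected in the chart, that contains it, and (ii) that it changes by less than $3$ under any perturbation by $g \in G_\eps$ for a fixed small $\eps$. In dimension $\geq 3$ one then verifies Rosendal's criterion for meager orbits: given any open $V \sub \cC(X)$, an arc representing $V$ is extended in two ways, winding around the annulus in opposite directions roughly $N \gg n$ times, and the strong $\R$-inseparability of manifolds of dimension $\geq 3$ is what allows these extensions to avoid the already-constructed part of the chain (this is precisely why the argument fails in dimension $2$, not merely because ``a direct meagerness argument is unavailable''). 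Your proposed ``$G$-equivariant local invariant with continuum-many values and nowhere dense level sets'' names the desired conclusion rather than supplying a construction; nothing in the proposal indicates what the invariant is, why rigid stabilizers give the claimed flexibility, or how the adversary's already-chosen open set $V$ is handled, which is the genuinely delicate point.

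In dimension $2$ your plan misuses the MHP extension. You propose to show that $\tS_G(\cC(X))$ is a \emph{metrizable} minimal flow with meager orbits and to describe it concretely; but the criterion that actually applies goes the other way: one exhibits $\eps > 0$ and infinitely many non-empty open subsets $U_n \sub \cC(X)$ whose saturations $G_\eps U_n$ are pairwise disjoint, and concludes that $\tS_G(\cC(X))$ is \emph{non-metrizable} (and still minimal), whence the UMF is non-metrizable. No description of the MHP extension is needed, and expecting it to be metrizable is hopeless -- its non-metrizability is exactly what one proves. The paper's concrete input here is again the winding number: tubes $\cU_N$ that wind $N$ times counterclockwise and then $2N$ times clockwise around the annulus, so that the asymmetric winding pattern survives $G_\eps$-perturbation and tubes with sufficiently different $N$ have disjoint $G_\eps$-saturations. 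Without some such explicit separation mechanism, both halves of your argument remain unproved.
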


In dimension $3$ or higher, we prove the following stronger result.

\begin{theorem}
  \label{th:i:dimension3}
  Let $X$ be a closed manifold of dimension at least $3$ or the Hilbert cube. Then the flow $\Homeo(X) \actson \cC(X)$ has meager orbits. In particular, if $G \leq \Homeo(X)$ is locally transitive, the UMF of $G$ has meager orbits.
\end{theorem}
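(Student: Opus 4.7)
The plan is to invoke a standard Baire-category dichotomy for continuous Polish group actions on Polish spaces: the action $\Homeo(X) \actson \cC(X)$ either admits a (unique) comeager orbit or has all orbits meager. I will rule out the first alternative by constructing, for a comeager set of chains in $\cC(X)$, a $\Homeo(X)$-invariant topological invariant that takes uncountably many pairwise distinct values.

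First, I parametrize any maximal chain $c$ as a continuous nondecreasing map $t \mapsto F_t$ from $[0,1]$ into the hyperspace $2^X$ with $F_0 = \emptyset$ and $F_1 = X$. The invariants I wish to extract are of the form ``the local embedding type of $\partial F_t$ inside $X$'' at a suitably chosen set of levels. The dimension hypothesis $\dim X \geq 3$ (or $X$ the Hilbert cube) is essential here: it supplies uncountable families of pairwise inequivalent wildly embedded codimension-$1$ objects---for example, variants of the Alexander horned sphere distinguished by fundamental-group invariants of their complements, or wild Cantor sets---which can serve as distinct level types.

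The main step is a Baire-category argument: show that for each prescribed wild embedding type $\tau$, the subset of $\cC(X)$ whose chains realize $\tau$ as the type of some level set is comeager. To establish density, I would construct an explicit perturbation procedure: given any chain $c$ and open neighborhood $U \ni c$ in $\cC(X)$, produce $c' \in U$ whose level set at a chosen interval realizes $\tau$. The local nature of this surgery---``grafting in'' a small wild object within a Euclidean chart---requires precisely the transverse directions available in dimension at least $3$, and analogously the infinite-dimensional flexibility of the Hilbert cube. Intersecting over a countable family of inequivalent $\tau$'s yields a comeager set of chains with uncountably many pairwise distinct invariant signatures, ruling out a comeager orbit.

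The ``in particular'' assertion follows readily: for any locally transitive $G \leq \Homeo(X)$, the $G$-orbits on $\cC(X)$ are contained in the $\Homeo(X)$-orbits and hence are meager; since $\cC(X)$ is a minimal $G$-flow by \cite{Gutman2008}, the techniques of \cite{BenYaacov2017} and \cite{Zucker2018} transfer this meagerness to the universal minimal flow $M(G)$. The principal obstacle I anticipate is precisely the surgery step: designing a perturbation of a maximal chain that modifies the embedding type of a single level set while preserving the global chain structure and respecting the topology of $\cC(X)$. This is where the dimension hypothesis enters essentially, and the failure of such constructions in dimension $2$, where codimension-$1$ spheres are flat by Schoenflies, is consistent with the theorem holding only in dimension $\geq 3$.
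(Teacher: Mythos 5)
Your proposal has a genuine gap, in fact two. The key step---that for each prescribed wild embedding type $\tau$ the set of chains realizing $\tau$ as the type of some level is comeager---is false. Members of a connected maximal chain are continua (so wild Cantor sets cannot occur as levels at all), and generically they are far tamer than you need: by Proposition~\ref{prop:generic-chain-z-set-pseudo-arcs} and Remark~\ref{rem:not-Hilbert-cube}, the generic chain in $\cC(X)$ has every proper member equal to a point or a pseudo-arc, i.e.\ a one-dimensional hereditarily indecomposable continuum. Hence the set of chains possessing a level homeomorphic to any codimension-one sphere, wild or tame, is meager; your grafting surgery could at best show such chains are dense, which is not enough. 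Moreover, even granting the step, the logic is backwards: intersecting countably many comeager sets yields a comeager set of chains each of which realizes \emph{all} of the chosen types, so they share the same signature rather than exhibiting uncountably many distinct ones. To exclude a comeager orbit via an invariant you must produce an invariant that is not generically constant (at least two values, each attained on a non-meager set), and your construction delivers the opposite. (Also, the dichotomy ``comeager orbit or all orbits meager'' needs topological transitivity plus the fact that orbits are analytic; it does hold here because $\cC(X)$ is minimal, but that should be said.)

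More fundamentally, invariants of individual levels cannot separate orbits generically: in the Hilbert cube the generic levels are Z-set pseudo-arcs, any two of which are equivalently embedded by Fact~\ref{f:homeomorphism-extension}, and indeed the partial-chain flow has a comeager orbit (Proposition~\ref{p:com-orbit-partial-chains}). The information that does separate nearby chains is global and asymptotic: how the chain winds around a fixed annulus $B$ in a Euclidean chart. The paper's proof verifies Rosendal's criterion (Fact~\ref{Fact:MeagerOrbits}) directly: inside an arbitrary tube $O(\cV) \sub V$ one extends a compatible arc in two ways, winding $N$ times around $B$ in opposite directions; strong $\R$-inseparability---valid exactly in dimension at least $3$ and in the Hilbert cube, and saying that an arc does not disconnect an open connected set---is what allows these extensions to avoid the already constructed part of the chain, and stability of the winding number under $\eps$-perturbations (Lemmas~\ref{l:PerturbWinding} and \ref{l:WindingBound}) yields open $W_0, W_1 \sub V$ with $G_\eps W_0 \cap W_1 = \emptyset$, hence meager orbits. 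So the dimension hypothesis enters through inseparability of arcs, not through the existence of wild codimension-one embeddings; if you want to salvage an invariant-based argument, the invariant must capture this winding behavior of tails of chains, not the topology of single levels.
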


Some important examples of locally transitive groups of homeomorphisms of a closed manifold $X$ include the path-component of the identity in $\Homeo(X)$ and the diffeomorphism group of $X$ if $X$ has a smooth structure (see \cite{Gutman2008}*{Example~3.1}).

We do not know whether Theorem~\ref{th:i:dimension3} holds in dimension $2$.
\begin{question}
  \label{q:sphere}
  Are the orbits of the action $\Homeo(S^2) \actson \cC(S^2)$ meager? Are the orbits of the UMF of $\Homeo(S^2)$ meager?
\end{question}

The paper is organized as follows. In Section~\ref{sec:meager-orbits-higher}, we give proofs of the two theorems above. In Section~\ref{sec:comeager-set-chains}, we give a description of the generic maximal connected chain in the Hilbert cube and prove that a closely related flow, that of \df{the partial connected chains} does have a comeager orbit.

\subsection*{Acknowledgments} Research was partially supported by the NSF grant no.\ DMS 1803489, the ANR project AGRUME (ANR-17-CE40-0026) and the \emph{Investissements d'Avenir} program of Université de Lyon (ANR-16-IDEX-0005). Y. G. was partially supported by the National Science Centre (Poland) grant 2016/22/E/ST1/00448.

\section{Proofs of Theorems \ref{th:i:dimension2} and \ref{th:i:dimension3}}
\label{sec:meager-orbits-higher}

Let $X$ be a closed manifold of dimension $m\geq 2$ or the Hilbert cube (in this case we set $m = \aleph_0$) and let $d$ be a compatible metric. For any $Y\subseteq X$ and $\delta > 0$, let $B_\delta(Y) = \{x\in X: d(x, Y) < \epsilon\}$.

    Note that as open subsets of manifolds are locally path connected, connectedness implies path-connectedness, so we will use those two terms interchangeably.

Next we describe the space of \emph{connected maximal chains} of closed subsets of $X$. Given any compact space $Y$, one can form $V(Y)$, the space of non-empty, compact subsets of $Y$ endowed with the Vietoris topology. If $\mathcal{B}$ is a basis for the topology of $Y$, a basis for $V(Y)$ is given by $V(\mathcal{B})$, where the typical member of $V(\mathcal{B})$ has the form
$$O(U_0,...,U_{n-1}):= \set[\Big]{K\in V(Y): K\cap U_i\neq \emptyset \text{ for } i < n \text{ and }  K\subseteq \bigcup_{i< n} U_i}$$
for $U_i\in \mathcal{B}$. The space of maximal chains of $X$, denoted $\Phi(X)$, is a subspace of $V(V(X)) =: V^2(X)$, where $c\in \Phi(X)$ iff $c$ satisfies the following conditions:
\begin{itemize}
	\item
	The members of $c$ are linearly ordered by inclusion;
	\item
	$c$ is maximal with this property, i.e.\ if $K\in V(X)$ is such that $c\cup \{K\}$ is also linearly ordered by inclusion, then $K\in c$.
\end{itemize}
A maximal chain $c\in \Phi(X)$ is \emph{connected} iff $c$ is connected as a compact subspace of $V(X)$. Equivalently, $c$ is connected iff each member of $c$ is connected (Lemma 2.3 of \cite{Gutman2008}). We let
\begin{equation*}
  \cC(X) = \set{c \in \Phi(X) : c \text{ is connected}}
\end{equation*}
denote the space of such chains. As $X$ will be fixed throughout this section, we also write $M := \cC(X)$ for brevity. The space $M$ is compact and it was shown in \cite{Gutman2008}*{Theorem~6.5} that if $G \leq \Homeo(X)$ is locally transitive, then the natural action of $G$ on $M$ is minimal.

We will prove that the UMFs of the various groups $G$ that we consider are non-metrizable by studying the minimal, metrizable flow $G \actson M$. It was shown in \cite{BenYaacov2017} that if the UMF of $G$ is metrizable, then it (and therefore every $G$-flow) has a comeager orbit. Thus to prove non-metrizability of the UMF, it suffices to show that $M$ has meager orbits. We will do this in dimension at least $3$ using the following criterion due to Rosendal (see \cite{BenYaacov2017} for a proof). Below $G_\epsilon$ denotes the $\epsilon$-ball around $1_G$ for some fixed right-invariant, compatible metric\footnote{The existence of such a metric is guaranteed by the Birkhoff--Kakutani theorem.} on $G$.

\begin{fact}
	\label{Fact:MeagerOrbits}
	Let $G$ be a Polish group, and suppose $Y$ is a Polish, topologically transitive $G$-space. Then the following are equivalent:
	\begin{enumerate}
		\item
		All orbits in $Y$ are meager;
		\item
		There is $\eps > 0$ and a non-empty open $U \subseteq Y$ such that for any non-empty open $V \subseteq U$, there are non-empty open $W_0, W_1\subseteq V$ such that $G_\eps  W_0\cap W_1 = \emptyset$.
	\end{enumerate}
\end{fact}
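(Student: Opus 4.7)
The plan is to prove the biconditional by contrapositive in both directions, using Effros' theorem for Polish $G$-actions together with a Baire-category fusion.

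For $(2) \Rightarrow (1)$, I assume some orbit $O = G y_0$ is non-meager and deduce $\neg(2)$. By Effros' theorem, $O$ is a $G_\delta$ subset of $Y$ and the orbit map $g \mapsto g y_0'$ is open onto $O$ for any $y_0' \in O$, so $G_{\epsilon/2} y_0'$ is open in $O$ for every $\epsilon > 0$. Topological transitivity combined with non-meagerness forces $O$ to be dense, hence comeager in $Y$. Given $\epsilon > 0$ and a non-empty open $U$, pick $y_0' \in O \cap U$, let $\Omega \subseteq Y$ be open with $\Omega \cap O = G_{\epsilon/2} y_0'$, and set $V := \Omega \cap U$. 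For any non-empty open $W_0, W_1 \subseteq V$, comeagerness of $O$ in $V$ provides points $z_i = g_i y_0' \in W_i$ with $g_i \in G_{\epsilon/2}$; by right-invariance of the metric, $d(g_1 g_0^{-1}, 1) = d(g_1, g_0) < \epsilon$, and $g_1 g_0^{-1} \cdot z_0 = z_1$ witnesses $G_\epsilon W_0 \cap W_1 \neq \emptyset$, contradicting the hypothesis in (2).

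For $(1) \Rightarrow (2)$, I assume $\neg(2)$ and construct a point $y^*$ with non-meager orbit. Explicitly, $\neg(2)$ provides, for every $\epsilon > 0$ and non-empty open $U \subseteq Y$, a non-empty open $V \subseteq U$ such that $G_\epsilon W \cap W' \neq \emptyset$ for all non-empty open $W, W' \subseteq V$. In such a $V$, fixing a countable base $\{B_n\}$, the set $D_\epsilon(V) := \{y \in V : G_\epsilon y$ is dense in $V\} = \bigcap_n \{y \in V : G_\epsilon y \cap B_n \neq \emptyset\}$ is a dense $G_\delta$ in $V$: each factor is open and dense by the hypothesis, using $G_\epsilon = G_\epsilon^{-1}$ for the right-invariant metric. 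Along a null sequence $\epsilon_n \searrow 0$, I build a nested family of non-empty open sets $V_n$ with $\overline{V_{n+1}} \subseteq V_n$, $\mathrm{diam}(V_n) \to 0$, each $V_n$ having the $\epsilon_n$-density property; a diagonal Baire-category fusion (writing each $D_{\epsilon_n}(V_n) = \bigcap_k U_{n,k}$ with $U_{n,k}$ open dense in $V_n$ and nesting $V_m$ inside the finite intersection $\bigcap_{j+k \leq m,\, j < m} U_{j,k}$) ensures that the unique $y^* \in \bigcap_n V_n$ lies in $D_{\epsilon_n}(V_n)$ for every $n$. In particular, $G_{\epsilon_n} y^*$ is dense in $V_n$ for every $n$.

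\textbf{The main obstacle} is the final step: concluding that $Gy^*$ is non-meager, which contradicts (1). Suppose for contradiction that $Gy^* \subseteq \bigcup_k M_k$ with each $M_k$ closed nowhere dense in $Y$, and write $\pi(g) := gy^*$. Then $G_{\epsilon_0} = \bigcup_k \bigl(G_{\epsilon_0} \cap \pi^{-1}(M_k)\bigr)$ expresses the Polish space $G_{\epsilon_0}$ as a countable union of closed subsets, so by the Baire category theorem some $\pi^{-1}(M_k)$ has non-empty interior in $G$. By continuity of left multiplication this interior contains a left-translate $hG_\eta$ for some $h \in G$ and $\eta > 0$, giving $G_\eta y^* \subseteq h^{-1} M_k$, which is closed nowhere dense. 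Choosing $m$ with $\epsilon_m < \eta$ forces $G_\eta y^* \supseteq G_{\epsilon_m} y^*$ to be dense in $V_m$, so $V_m \subseteq \overline{G_\eta y^*} \subseteq h^{-1} M_k$; this places a non-empty open set inside a nowhere dense set, the desired contradiction.
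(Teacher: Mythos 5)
Your proof is correct. Note that the paper offers no proof of this Fact at all: it is quoted as Rosendal's criterion with a pointer to the Ben Yaacov--Melleray--Tsankov paper, so there is no in-text argument to compare against; your route (Effros' theorem plus right-invariance of the metric for $(2)\Rightarrow(1)$, and a Baire-category fusion plus a translation/BCT argument in $G$ for $(1)\Rightarrow(2)$) is essentially the standard one from that reference. The individual steps check out: for a right-invariant metric one indeed has $d(g_1g_0^{-1},1)=d(g_1,g_0)$ and $G_\eps=G_\eps^{-1}$; topological transitivity is used exactly where needed, to upgrade the non-meager orbit $O$ to a dense (hence, being $G_\delta$ by Effros, comeager) one; the sets $U_{j,k}$ remain dense open when restricted to the later $V_m\subseteq V_j$, so the finite intersections in your fusion are non-empty and the nesting with $\overline{V_{m+1}}\subseteq V_m$ and shrinking diameters (for a fixed complete metric on the Polish space $Y$) yields the desired $y^*\in\bigcap_n D_{\eps_n}(V_n)$; and in the final step the passage from a non-empty interior of $\pi^{-1}(M_k)$ to $hG_\eta\subseteq\pi^{-1}(M_k)$, hence $G_\eta y^*\subseteq h^{-1}M_k$ with $h^{-1}M_k$ closed nowhere dense, clashes with density of $G_{\eps_m}y^*$ in $V_m$ once $\eps_m<\eta$. (Two cosmetic remarks: the restriction to $G_{\eps_0}$ before applying the Baire category theorem is unnecessary---all of $G$ works---and in $(2)\Rightarrow(1)$ plain density of $O$, rather than comeagerness, already produces the points $z_i\in W_i\cap O$.)
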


In dimension $2$, we are unable to verify that $M$ has meager orbits and we resort to a more general criterion isolated in \cite{Zucker2018}. Recall that for any $G$-flow $Y$, $\tS_G(Y)$ denotes the \emph{maximal highly proximal extension} of $Y$ (see \cite{Zucker2018} for the definition). We note that the equivalent conditions of Fact~\ref{Fact:MeagerOrbits} imply the assumptions of Fact~\ref{Fact:HPNonmetrizable}.

\begin{fact}
	\label{Fact:HPNonmetrizable}
	Let $G$ be a second countable group, and suppose $Y$ is a $G$-flow with the property that there is $\eps > 0$ and a collection $\{U_n: n \in \N \}$ of non-empty open subsets of $Y$ such that the sets $\{G_\eps U_n: n \in \N \}$ are pairwise disjoint. Then $\tS_G(Y)$ is non-metrizable. If $Y$ is minimal, then so is $\tS_G(Y)$.
\end{fact}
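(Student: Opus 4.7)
The plan is to show that $\tS_G(Y)$ is non-metrizable by contradiction, leveraging the structural description of the maximal highly proximal extension from \cite{Zucker2018} to translate the uniform-separation hypothesis on $Y$ into a non-separability statement about the clopen algebra of $\tS_G(Y)$.

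First I would recall that $\tS_G(Y)$ is realized as the Stone space of a canonical $G$-invariant complete Boolean algebra $\mathcal{B}$ mapping naturally into the regular opens of $Y$, with $\pi:\tS_G(Y)\to Y$ the resulting factor map. Under this identification, each open set $G_\eps U_n\subseteq Y$ lifts to a non-empty clopen $C_n\subseteq\tS_G(Y)$ contained in $\pi^{-1}(\overline{G_\eps U_n})$, and the pairwise disjointness of $\{G_\eps U_n\}$ forces the $\{C_n:n\in\N\}$ to be pairwise disjoint as well.

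Second, I would fix $z_n\in C_n$ and, for every non-principal ultrafilter $p\in\beta\N\setminus\N$, take $z_p$ to be an accumulation point of $(z_n)$ along $p$. For distinct ultrafilters $p\neq q$, choose $A\in p$ with $A\notin q$; since $\mathcal{B}$ is complete, the join $\overline{\bigcup_{n\in A}C_n}$ is again a clopen of $\tS_G(Y)$, and this clopen contains $z_p$ but not $z_q$. This produces $2^{\aleph_0}$ pairwise distinct points separated by $2^{\aleph_0}$ distinct clopens, so the clopen algebra of $\tS_G(Y)$ is non-separable, contradicting metrizability (a compact metrizable space has a countable basis and hence a countable clopen algebra).

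For the minimality claim, suppose $Y$ is minimal and let $Z\subseteq\tS_G(Y)$ be a minimal subflow. Then $\pi(Z)=Y$ by minimality of $Y$, and one checks directly that $\pi|_Z:Z\to Y$ inherits the highly proximal property from $\pi$ (fibers of $\pi|_Z$ sit inside fibers of $\pi$, and the ``shrinking by group elements'' condition passes to $Z$). Maximality of $\tS_G(Y)$ as a highly proximal extension of $Y$ then forces $Z=\tS_G(Y)$, so $\tS_G(Y)$ is minimal. The main obstacle is making precise the lifting $G_\eps U_n\mapsto C_n$ inside the complete algebra $\mathcal{B}$ and verifying that the resulting clopens admit the arbitrary-join behavior needed to distinguish ultrafilter limits; this is the technical heart of the argument and is exactly what the construction of $\tS_G(Y)$ in \cite{Zucker2018} is designed to deliver, with the rest of the proof being standard ultrafilter-theoretic bookkeeping.
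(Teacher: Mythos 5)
The paper does not prove this statement at all: it is imported as a Fact from \cite{Zucker2018}, so your proposal has to be measured against the argument there, which runs through the near-ultrafilter construction of $\tS_G(Y)$. Your proof breaks at its very first step. The identification of $\tS_G(Y)$ with the Stone space of a complete Boolean algebra (essentially the Gleason cover, i.e.\ the Stone space of the regular open algebra of $Y$) is valid only for \emph{discrete} $G$. For a non-discrete second countable group --- the only interesting case here --- $\tS_G(Y)$ is built from near-ultrafilters of subsets of $Y$, where the finite intersection property is demanded only after thickening by arbitrarily small identity neighbourhoods, and the resulting space is in general not zero-dimensional; it can even be connected. Concretely, for $G = \Homeo^+(S^1)$ and $Y = S^1$, the universal minimal flow is $S^1$ and is its own maximal highly proximal extension, so $\tS_G(S^1) = S^1$. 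Hence there are no clopen lifts $C_n$, the ``complete joins'' $\overline{\bigcup_{n\in A} C_n}$ need not be clopen, and in a general compact space disjoint open sets may have intersecting closures, so nothing prevents $z_q$ from lying in $\overline{\bigcup_{n\in A} C_n}$: the ultrafilter-separation step collapses. The failure is not cosmetic: you use the hypothesis only to obtain pairwise disjoint lifts, and plain disjointness of the $U_n$ would do the same in any Stone-space model, so your scheme would ``prove'' that $\tS_G(Y)$ is non-metrizable whenever $Y$ is infinite --- false for $\Homeo^+(S^1) \actson S^1$. The $\eps$-thickening must enter the argument quantitatively, not merely as a source of disjointness.

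The correct route, following \cite{Zucker2018}, is as follows. Since $d_G$ is right-invariant, $G_{\eps/2} G_{\eps/2} \sub G_\eps$. For $S \sub \N$ set $A_S = \bigcup_{n \in S} G_{\eps/2} U_n$. If $S \cap T = \emptyset$, then $G_{\eps/2} A_S \cap G_{\eps/2} A_T \sub \bigcup_{n\in S} G_\eps U_n \cap \bigcup_{n \in T} G_\eps U_n = \emptyset$, so no near-ultrafilter can contain both $A_S$ and $A_T$. On the other hand, for every ultrafilter $p$ on $\N$ the family $\{A_S : S \in p\}$ has the near finite intersection property (finitely many members contain a common $G_{\eps/2} U_n$), hence extends to a point $z_p \in \tS_G(Y)$, and distinct ultrafilters yield distinct points. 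Thus $|\tS_G(Y)| \geq 2^{2^{\aleph_0}}$, which is incompatible with metrizability. Your treatment of the minimality claim is essentially the standard argument and is fine once phrased via irreducibility: the extension $\pi \colon \tS_G(Y) \to Y$ is irreducible, so any nonempty closed invariant $Z \sub \tS_G(Y)$ satisfies $\pi(Z) = Y$ by minimality of $Y$ and therefore $Z = \tS_G(Y)$; but the non-metrizability half needs to be redone in the near-ultrafilter framework as above.
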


We note that the criteria of both Fact~\ref{Fact:MeagerOrbits} and Fact~\ref{Fact:HPNonmetrizable} become easier to verify if $G$ is smaller, so from here on, we assume without loss of generality that $G = \Homeo(X)$ and we equip it with the right-invariant metric $d_G$ given by $d_G(g,h) = \sup \set{d(gx, hx): x\in X}$.

We will need some of the tools from \cite{Gutman2008} to analyze the space $M$. A chain $c\in M$ is called a \emph{ray} if there is a continuous injective map $\phi\colon [0,\infty)\to X$ such that $\im{\phi}$ is dense and such that $c = \{X\}\cup \{\phi([0,t]): t \geq 0\}$. By abuse of language, we will often confound $c$ and $\phi$. In \cite{Gutman2008}, it is proven that the rays are dense in $M$. This is then used to provide a particularly useful $\pi$-base for the topology of $M$ (where a \emph{$\pi$-base} for a topological space $Y$ is a collection $\mathcal{B}$ of non-empty, open subsets of $Y$ such that every non-empty, open $U\subseteq Y$ contains a member of $\mathcal{B}$).

\begin{defn}
	\label{Def:Tube}
A \emph{tube} is a sequence of non-empty, open subsets $U_0,...,U_{n-1}\subseteq X$ satisfying the following properties:
\begin{itemize}
	\item
      Each $U_i$ is connected;
	\item
	For $i < j < n$, we have $\overline{U_i}\cap \overline{U_j}\neq \emptyset$ iff $U_i \cap U_j \neq \emptyset$ iff $j = i+1$.
  \end{itemize}
\end{defn}

We will refer to the sets $U_i$ as the \df{links} of the tube and denote by $\bigcup \cU$ the union of all links of $\cU$. A \emph{subtube} of $\cU = \langle U_0,...,U_{n-1}\rangle$ is a tube of the form $\langle U_k,...,U_\ell\rangle$ for some $0\leq k \leq \ell< n$. If $k< n$ and we set $\cV := \langle U_0,...,U_{k-1}\rangle$ and $\cW := \langle U_k,...,U_{n-1}\rangle$, we sometimes write $\cU = \cV^\frown \cW$.

If $\mathcal{U} := \langle U_0,...,U_{n-1}\rangle$ is a tube, we can associate to it the following open subset of $M$:
$$O(\mathcal{U}) := \{c\in M: c\cap O(U_0,...,U_k)\neq \emptyset \text{ for each } k < n\}.$$
We will also refer to $O(\mathcal{U})$ as a tube. It is shown in \cite{Gutman2008} that every ray has a neighborhood basis consisting of tubes. It follows that the collection of tubes forms a $\pi$-base for $M$.

\begin{figure}
  \includegraphics[height=8cm]{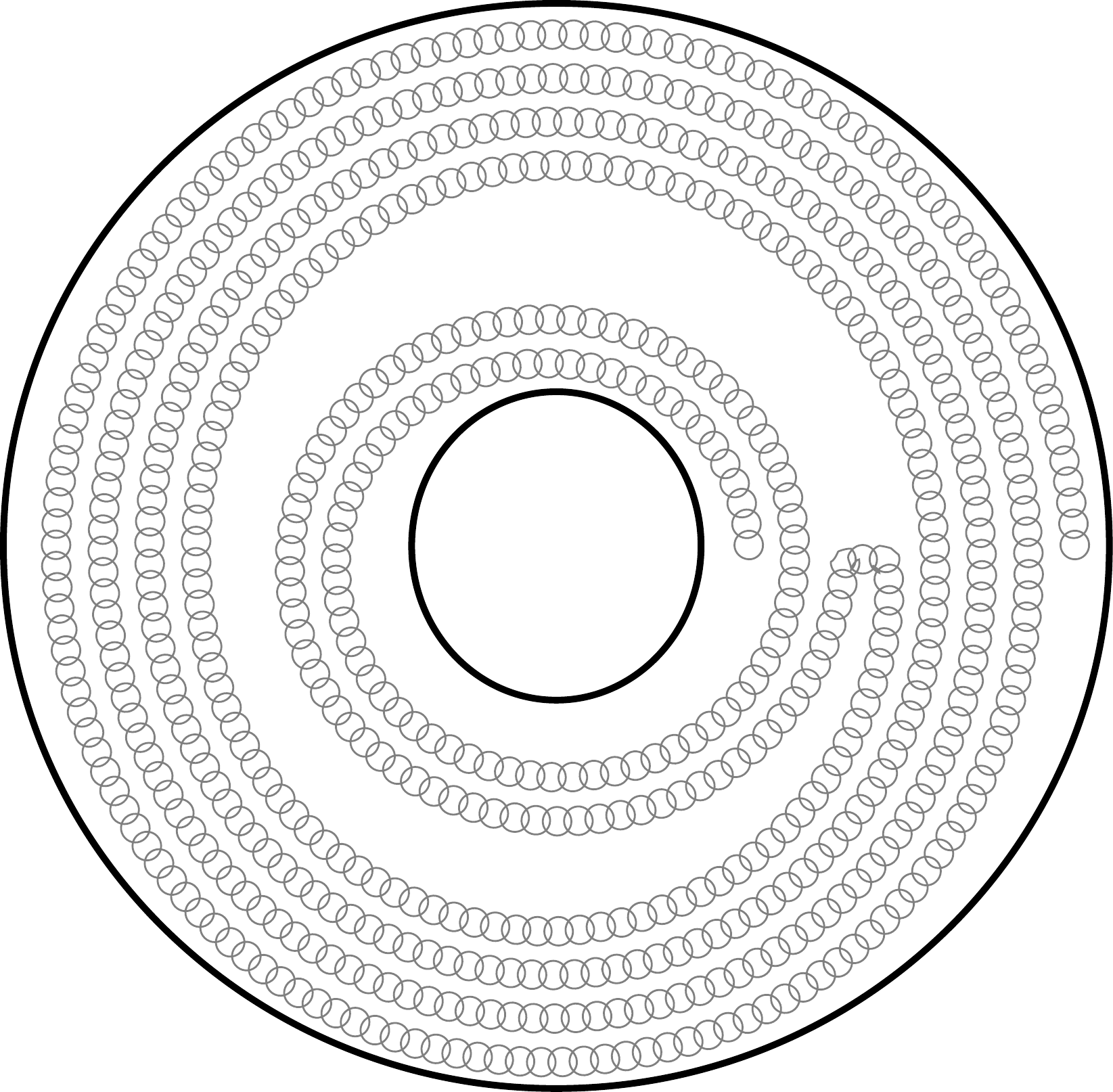}
  \caption{The tube $\cU_2$}
  \label{fig:tube-U2}
\end{figure}

Before proceeding with more technical details, we describe the strategy of the proofs. We start with the weaker result in dimension $2$, as it is simpler and can serve as a guide for reading the rest of the paper. Our argument is entirely local and does not depend on the global structure of the manifold: all constructions are done in a fixed closed disc $Z \sub X$.  First we fix an annulus $A$ in $Z$. The main idea is to consider tubes $\set{\cU_N : N \in \N}$ that wind around the annulus, first counterclockwise $N$ times, then clockwise $2N$ times (see Figure~\ref{fig:tube-U2}). We show that there exists $\eps > 0$ such that if $N$ and $N'$ are sufficiently far apart, then $G_\eps \cdot O(\cU_N) \cap G_\eps \cdot O(\cU_{N'}) = \emptyset$, thus verifying the assumptions of Fact~\ref{Fact:HPNonmetrizable}. It is harder to witness the criterion of Fact~\ref{Fact:MeagerOrbits} because the adversary gives us the beginning of the two tubes (the open set $V \sub M$) and we only control the tails. The reason we need an extra dimension in this case is to be able to avoid the part of the chain that has already been constructed while we are winding around.

Now we return to the technical definitions. Instead of working with rays, it is often easier to work with \emph{arcs}, i.e.\ continuous injections from an interval $[s,t]$ to $X$. We often simply take $s = 0$ and $t = 1$. We will use the term \emph{path} to refer to a continuous function from an interval $[s,t]$ to $X$ which may not be injective.

\begin{defn}
	\label{Def:Compatible}
If $\cU = \langle U_0, \ldots, U_{n-1} \rangle$ is a tube and $\phi\colon [0, 1]\to X$ is an arc, we will say that $\phi$ and $\cU$ are \df{compatible} (or that \df{$O(\cU)$ contains $\phi$}) if $\phi(0) \in U_0$, $\phi(1) \in U_{n-1}$ and $\im \phi \sub \bigcup \cU$.
\end{defn}

Note that for every tube, there is an arc compatible with it. Also note that if $\phi$ and $\cU$ are compatible, this implies that any ray extending $\phi$ belongs to $O(\cU)$.

\begin{defn}
	\label{Def:Refines}
If $\cU = \langle U_0, \ldots, U_{n-1}\rangle$ and $\cV = \langle V_0, \ldots, V_{N-1}\rangle$ are tubes, we will say that $\cV$ \df{refines} $\cU$ if $V_0 \sub U_0$, and for all $i$ there is a $j$ such that $V_i \sub U_j$, and for all $j$ there is an $i$ such that $V_i \sub U_j$.
\end{defn}

Note that if $\cV$ refines $\cU$, then $O(\cV) \sub O(\cU)$. We note that if $\phi$ and $\cV$ are compatible, then $\phi$ and $\cU$ are compatible if the last link of $\cV$ is contained in the last link of $\cU$.

\textbf{We now fix once and for all} a closed Euclidean neighborhood $Z'$ in $X$, which we identify with $[-2,2]^m$. If $x \in Z'$, we will denote by $(x_0, \ldots, x_{m-1})$ its coordinates coming from the identification of $Z'$ with $[-2, 2]^m$. We may assume that the metric $d$ on $X$ restricts to the usual Euclidean metric on $Z'$. In the case of the Hilbert cube, we let $Z'$ be the entire space and we use the metric
\begin{equation*}
  d(x, y) = \sum_n 2^{-n}|x_n - y_n|.
\end{equation*}

If $x, y \in Z'$, denote by $[x, y] $ the line segment between $x$ and $y$. An arc in $Z'$ is called \df{piecewise linear} if it is a concatenation of finitely many line segments. We note the following basic lemma.
\begin{lemma}
  \label{l:piece-linear-path}
  Let $U \sub Z'$ be open and connected and let $x, y \in U$. Then there exists a piecewise linear arc $\phi \colon [0, 1] \to U$ such that $\phi(0) = x$, $\phi(1) = y$.
\end{lemma}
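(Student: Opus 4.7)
The plan is to prove the lemma in two stages: first establish that $U$ is polygonally connected, producing some piecewise linear (but possibly self-intersecting) path from $x$ to $y$, and then upgrade this path to an injective piecewise linear arc via a finite graph-theoretic surgery.

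For the first stage, I would introduce the relation $\sim$ on $U$ by declaring $p \sim q$ iff some piecewise linear path in $U$ joins them. Every point of $U$ has a convex open neighborhood contained in $U$: a small coordinate ball in the Euclidean case, or a small $d$-ball in the Hilbert cube case, where the metric $d(x,y) = \sum_n 2^{-n}|x_n-y_n|$ is of $\ell^1$ type and hence has convex balls. Any two points of such a neighborhood are joined by the line segment between them, so the classes of $\sim$ are open and, by connectedness of $U$, coincide with $U$. This yields a piecewise linear path $\gamma$ from $x$ to $y$ in $U$, expressible as a concatenation of line segments $\sigma_1, \ldots, \sigma_k$ with vertices $v_0 = x, v_1, \ldots, v_k = y$.

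For the second stage, I would form a finite graph $G$ embedded in $U$ by subdividing each $\sigma_i$ at all points where it meets a distinct $\sigma_j$, treating overlaps of collinear segments as shared sub-edges. The vertex set of $G$ is the resulting finite subset of $U$ (containing $x$ and $y$) and the edges are the sub-segments. The original $\gamma$ then traces a walk in $G$ from $x$ to $y$, and the standard graph-theoretic fact that every walk between two vertices contains a simple path between them produces a simple path from $x$ to $y$ in $G$. Concatenating the corresponding sub-segments and reparameterizing gives the desired piecewise linear arc $\phi \colon [0,1] \to U$ with $\phi(0) = x$ and $\phi(1) = y$.

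The main technical issue is the handling of degeneracies when constructing $G$. In dimension $m=2$ non-consecutive segments can meet transversely at interior points, and in any dimension two collinear segments may overlap on a whole sub-segment or several segments may share a common point. These are all absorbed provided we are careful to record every pairwise intersection locus in the subdivision step, so the argument applies uniformly to closed manifolds of dimension at least $2$ and to the Hilbert cube.
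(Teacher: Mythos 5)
Your proof is correct and takes essentially the same route as the paper: local convexity of (relative) balls in $Z'$ plus connectedness of $U$ produces a piecewise linear path from $x$ to $y$, which is then made injective. The paper phrases the first step via the intersection graph of open balls contained in $U$ and dispatches injectivity with ``erase all loops,'' whereas you run the chaining argument through open equivalence classes and justify the loop-erasure carefully via a finite subdivision graph --- a more detailed write-up of the same idea rather than a different approach.
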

\begin{proof}
  Note that the line segment between the centers of two intersecting balls in $\R^m$ is contained in their union.
  Now let $\cB$ be the collection of all open balls contained in $U$. Form a graph on $\cB$ by connecting two balls $B_1, B_2 \in \cB$ with an edge if $B_1 \cap B_2 \neq \emptyset$ and note that as $U$ is connected, this graph is connected. Let $C, D \in \cB$ be balls with centers $x$ and $y$ respectively and let $C = B_0, \ldots, B_{n-1} = D$ be a path in $\cB$ connecting $C$ to $D$. Let $z_i$ be the center of $B_i$. Then $\bigcup_i [z_i, z_{i+1}]$ is a piecewise linear path contained in $U$ connecting $x$ to $y$. One can make it injective by erasing all loops.
\end{proof}

If $m < \aleph_0$, let $Z = [-1, 1]^m \sub Z'$; if $X$ is the Hilbert cube, let $Z = Z' = X$. In most of what follows, we will only work in $Z$. ($Z'$ is only needed in the proof of Lemma~\ref{l:pw-linear-Z}.) In particular, we will need arcs and tubes which behave nicely with respect to $Z$.

\begin{defn}
	\label{Def:SimplyConnected}
	\begin{enumerate}
		\item
If $\cU = \langle U_0, \ldots, U_{n-1} \rangle$ is a tube, we will say that $\cU$ is \df{simply connected} if each $U_i$ is simply connected and for all $i < n-1$, $U_i \cap U_{i+1}$ is connected. Note that by Van Kampen's theorem, this implies that for all $k < \ell$, the union $U_k \cupdots U_\ell$ is simply connected.
		\item
Let $Y \sub X$. We will say that $\cU$ is \df{simply connected in $Y$} if every subtube $\cU' = \langle U_k, \ldots, U_\ell \rangle$ of $\cU$ which is entirely contained in $Y$ is simply connected.
	\end{enumerate}
\end{defn}
The idea is the following: if we could work entirely with simply connected tubes, the proof would simplify. Simply connected tubes are easy to create in a given Euclidean region; however, we lose control of this when moving from one chart to another. Therefore, we must content ourselves to only demanding that tubes be simply connected in one Euclidean region. In the case of the Hilbert cube, there are no such difficulties and we can take $Z = Z'$ to be the entire space, which simplifies some aspects of the proof.

Say that an arc $\phi \colon [0, 1] \to X$ is \df{piecewise linear in $Z$} if for every open interval $(a, b) \sub [0, 1]$ such that $\phi((a, b)) \sub \Int Z$, we have that $\phi|_{[a, b]}$ is piecewise linear.

\begin{lemma}
  \label{l:pw-linear-Z}
  \begin{enumerate}
  \item \label{i:pwlZ:1} For every tube, there is an arc compatible with it which is piecewise linear in $Z$.

   \item \label{i:pwlZ:2} Let $\cU$ be a tube such that $\bigcup \cU \sub Z$ and let $\phi$ be a piecewise linear arc compatible with $\cU$. Then there exists a simply connected tube $\cV$ refining $\cU$ such that $\phi$ is compatible with $\cV$.

  \item \label{i:pwlZ:3} For every tube $\cU$ and arc $\phi$ compatible with $\cU$ and piecewise linear in $Z$, there exists a tube $\cV$ refining $\cU$ and compatible with $\phi$ which is simply connected in $Z$.

  \item \label{i:pwlZ:4} The collection of tubes which are simply connected in $Z$ forms a $\pi$-basis for $M$.
  \end{enumerate}
\end{lemma}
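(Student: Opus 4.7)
The plan is to prove the four clauses in the order (i) $\to$ (ii) $\to$ (iii) $\to$ (iv). Part (iv) will be immediate from (i) and (iii) combined with the already-established fact that tubes form a $\pi$-basis of $M$, while (ii) and (iii) share a common ``thicken $\phi$ into a tube'' construction. The substantive work is therefore in (i), where we must produce an injective arc through a given tube that is piecewise linear inside $\Int Z$; the delicate point will be the interaction between the link structure and $\partial Z$.

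For (i), I will construct the arc link by link. Pick transition points $p_{-1} \in U_0$, $p_i \in U_i \cap U_{i+1}$ for $0 \le i < n-1$, and $p_{n-1} \in U_{n-1}$, and for each $i$ join $p_{i-1}$ to $p_i$ by a path $\gamma_i$ inside $U_i$ that is piecewise linear on every maximal subinterval where it lies in $\Int Z$. To build such $\gamma_i$, begin with any path in $U_i$ from $p_{i-1}$ to $p_i$ and then, on each maximal subinterval $(a, b)$ with $\gamma_i((a, b)) \sub \Int Z$, replace $\gamma_i|_{[a, b]}$ by a piecewise linear arc from $\gamma_i(a)$ to $\gamma_i(b)$ inside $U_i \cap Z$: apply Lemma~\ref{l:piece-linear-path} to the connected component of $U_i \cap \Int Z$ containing $\gamma_i((a, b))$, and if $\gamma_i(a) \in \partial Z$ (resp.\ $\gamma_i(b) \in \partial Z$) prepend (resp.\ append) a short straight segment into $\Int Z$ that stays in $U_i$, using openness of $U_i$ and the convexity of $Z \cong [-1,1]^m$ inside $Z' \cong [-2,2]^m$. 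The concatenation $\gamma = \gamma_0 * \cdots * \gamma_{n-1}$ is then compatible with $\cU$ and piecewise linear in $Z$; since $m \ge 2$, a small general-position perturbation inside each open $U_i$ makes $\gamma$ injective without disturbing these properties. In the Hilbert cube case, $Z = X$ and no $\partial Z$ bookkeeping is needed.

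For (ii) and (iii), the plan is to thicken $\phi$ into a tube. Subdivide $[0, 1]$ as $0 = t_0 < \cdots < t_N = 1$ so that each $\phi([t_i, t_{i+1}])$ lies in a single link $U_{j(i)}$, with $j$ non-decreasing, $j(0) = 0$, and surjective onto $\{0, \ldots, n-1\}$; then set $V_i := B_{\delta_i}(\phi([t_i, t_{i+1}]))$ with $\delta_i > 0$ small enough that $V_i \sub U_{j(i)}$, $\overline{V_i} \cap \overline{V_k} = \emptyset$ whenever $|i - k| \ge 2$, and $V_i \cap V_{i+1}$ is a small connected Euclidean ball around $\phi(t_{i+1})$. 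This gives a tube $\cV$ refining $\cU$ and compatible with $\phi$. For (ii), the hypothesis $\bigcup \cU \sub Z$ forces $V_i \sub \Int Z'$, so each $V_i$ is a $\delta_i$-neighborhood of a simple piecewise linear sub-arc in Euclidean space and hence simply connected; combined with the connected consecutive intersections, this makes $\cV$ a simply connected tube. For (iii), the same construction works, and any subtube of $\cV$ entirely contained in $Z$ consists by openness of links inside $\Int Z$, each a tubular neighborhood of a simple piecewise linear sub-arc in Euclidean space, so the subtube is simply connected by the same argument.

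Finally (iv) is immediate: given a nonempty open $U \sub M$, the $\pi$-base property of tubes yields $\cU$ with $O(\cU) \sub U$, (i) produces a piecewise-linear-in-$Z$ arc $\phi$ compatible with $\cU$, and (iii) then produces a tube $\cV$ refining $\cU$, simply connected in $Z$, with nonempty $O(\cV) \sub O(\cU)$. The main obstacle I anticipate is the $\partial Z$ bookkeeping in (i): ensuring that the piecewise linear replacements on each $\Int Z$-piece of $\gamma_i$ splice with the unchanged portions at $\partial Z$ into a single continuous path without losing link membership, and that the final general-position perturbation preserves both injectivity and the piecewise-linear-in-$Z$ property. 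Once (i) is in place, (ii) and (iii) reduce to choosing the $\delta_i$ small enough, and (iv) is formal.
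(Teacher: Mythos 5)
Your overall architecture matches the paper's ((i) feeds (iii), (ii) is the Euclidean thickening step, (iv) is formal), but two of your key steps have genuine gaps, and they are precisely the points where the paper's proof is organized differently. First, in (i): your link-by-link path $\gamma_i$ can cross $\partial Z$ infinitely often, so the set $\gamma_i^{-1}(\Int Z)$ may consist of infinitely many maximal intervals. You replace the path on each of them by an arc obtained from Lemma~\ref{l:piece-linear-path}, but that lemma gives no control on the diameter of the replacement (the relevant component of $U_i\cap \Int Z$ may be long and thin even when the two endpoints are close), so the spliced map need not be continuous at accumulation points of these intervals; the extra ``short straight segments'' at $\partial Z$ also modify the path outside the intervals being replaced and can collide with neighbouring replacements. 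The paper avoids this by \emph{first} thickening an arbitrary compatible arc into a refining tube with small links such that every link meeting $Z$ lies in $Z'$; the links inside $Z'$ then group into \emph{finitely many} blocks, the arc is built as a finite concatenation (arbitrary arcs between blocks, piecewise linear arcs inside blocks), and injectivity is obtained by erasing loops rather than by a ``general position'' perturbation (which in the Hilbert cube case would itself need an argument, and which can destroy piecewise linearity unless taken to be piecewise linear).

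Second, in (ii) (and hence in (iii)): the claim that $B_{\delta_i}\big(\phi([t_i,t_{i+1}])\big)$ is simply connected because it is a neighbourhood of a simple piecewise linear sub-arc is false as stated — a nearly closed PL arc whose endpoints are at distance less than $2\delta$ has an annular $\delta$-neighbourhood — and even the ``$\delta$ small enough'' version requires a regular-neighbourhood argument that is not available verbatim in the Hilbert cube; likewise $V_i\cap V_{i+1}$ being connected is not automatic for neighbourhoods of PL pieces. The paper's fix is to subdivide until each piece is a \emph{single} line segment, so that each link $B_\delta([\phi(t_i),\phi(t_{i+1})])$ is convex (both in $[-2,2]^m$ and for the chosen metric on the Hilbert cube); then links are simply connected and consecutive intersections, being intersections of convex sets, are connected. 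Your (iii) has the additional problem that $\phi$ is piecewise linear only on intervals mapping into $\Int Z$, of which there may be infinitely many maximal ones, so a single finite subdivision of $[0,1]$ into ``single segment'' pieces need not exist; the paper instead applies (ii) only to the finitely many maximal subtubes of $\cU$ contained in $Z$ (where the corresponding piece of $\phi$ is genuinely piecewise linear) and splices the resulting simply connected tubes with the untouched links of $\cU$. Your deduction of (iv) from (i) and (iii) is correct. To repair your write-up you would essentially have to import these two devices (refine to small links before doing any $\partial Z$ bookkeeping; make links convex neighbourhoods of single segments), at which point you recover the paper's proof.
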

\begin{proof}
  \ref{i:pwlZ:1} Let $\cU = \langle U_0, \ldots, U_{n-1} \rangle$ be a tube, and let $\phi\colon [0,1]\to X$ be a compatible arc. Find $0 = s_0 < s_1 <\cdots < s_N = 1$ such that $\phi([s_j, s_{j+1}])$ has diameter at most $1/10$ and is contained in some $U_i$ for each $j< N$. We specifically demand that $\phi([s_0, s_1])\subseteq U_0$ and $\phi([s_{N-1}, s_N])\subseteq U_{n-1}$. Choose $\delta > 0$ small enough so that by setting $V_j = B_\delta(\phi([s_j, s_{j+1}]))$, we obtain a tube $\cV := \langle V_0,...,V_{N-1}\rangle$ which refines $\cU$, with $V_{N-1}\subseteq U_{n-1}$, and with the property that any link of $\cV$ which meets $Z$ is contained in $Z'$.

  Let $\set{[k_i, \ell_i] : i < q}$ be maximal subintervals of $[0, N-1]$ such that for each $i$, $\bigcup_{j = k_i}^{\ell_i} V_j \sub Z'$. We assume that $0 < k_0$ and $\ell_{q-1} < N-1$; if this is not the case, the construction can be modified accordingly. Notice that for each $i<q$, we have that $V_{k_i}\cap V_{k_i-1}\cap Z = V_{\ell_i}\cap V_{\ell_i+1}\cap Z = \emptyset$. For each $i<q$, let $z_i \in V_{k_i} \cap V_{k_i-1}$ and $z_i' \in V_{\ell_i} \cap V_{\ell_i+1}$. Let $z_{-1}' \in V_0 \sminus Z$, $z_q  \in V_{N-1} \sminus Z$. For each $i$, let $\psi_i$ be an arc from $z_{i-1}'$ to $z_i$ contained in $\bigcup_{j = \ell_{i-1}+1}^{k_i-1} V_j$ and, using Lemma~\ref{l:piece-linear-path}, let $\psi_i'$ be a piecewise linear arc from $z_i$ to $z_i'$ contained in $\bigcup_{j = k_i}^{\ell_i} V_j$. Finally, concatenate all of those and erase the loops.

    \ref{i:pwlZ:2} Write $\cU = \langle U_0, \ldots, U_{n-1} \rangle$. Let $\delta > 0$ and $0 = t_0 < \cdots < t_k = 1$ be such that, denoting $V_i = B_\delta\big([\phi(t_i), \phi(t_{i+1})]\big)$:
  \begin{itemize}
  \item $\phi$ is a line segment between $t_i$ and $t_{i+1}$;
  \item $\cV = \langle V_0, \ldots, V_{k-1} \rangle$ is a tube;
  \item $\cV$ refines $\cU$.
  \end{itemize}
  The tube $\cV$ is simply connected because each $V_i$ is convex.

  \ref{i:pwlZ:3} Let $\cU = \langle U_0, \ldots, U_{n-1}\rangle$ and $\phi \colon [0, 1] \to X$ be given.
  Let $\cU' = \langle U_q, \ldots, U_r \rangle$ be a maximal subtube of $\cU$ contained entirely in $Z$. Let $(a, b) \sub [0, 1]$ be an interval with $\phi((a, b)) \sub \bigcup\cU'$, $\phi(a)\in U_{q-1}\cap U_q$, and $\phi(b)\in U_r\cap U_{r+1}$. By the assumptions on $\phi$, $\phi|_{[a, b]}$ is piecewise linear.  Using \ref{i:pwlZ:2}, let $\cV = \langle V_0, \ldots, V_{k-1} \rangle$ be a simply connected tube refining $\cU'$ compatible with $\phi|_{[a, b]}$. Let $V_\ell$ be the last element of $\cV$ that intersects $U_{q-1}$ and let $V_p$ be the first element that intersects $U_{r+1}$. Then $\langle U_0, \ldots, U_{q-1}, V_{\ell}, \ldots, V_{p}, U_{r+1}, \ldots, U_{n-1} \rangle$ is a tube that refines $\cU$. Then one can repeat this procedure in order to replace all maximal subtubes of $\cU$ contained in $Z$ with simply connected ones.

  \ref{i:pwlZ:4} Follows from \ref{i:pwlZ:2} and \ref{i:pwlZ:3}.
\end{proof}

We define three closed \emph{annuli} in $Z$ as follows:
\begin{align*}
B :=& \{x \in Z : 5/11 \leq x_0^2 + x_1^2 \leq 6/10\},\\
A :=& \{x \in Z : 1/2 \leq x_0^2 + x_1^2 \leq 3/4\},\\
A':=& \{x \in Z : 1/4 \leq x_0^2 + x_1^2 \leq 9/10\}.
\end{align*}
(They are real annuli only if $m = 2$; otherwise they are rather the pullbacks of annuli by the projection map. We still call them annuli even if $m > 2$ by abuse of language.) Note that $B \sub A \sub A'$. The common \emph{center} of the annuli is the set
\begin{equation*}
  \{(0,0,x_2,...,x_{m-1}) : -1 \leq x_2,...,x_{m-1} \leq 1\}.
\end{equation*}
We also define the function $\alpha \colon A' \to \bT$ by
\begin{equation*}
  \alpha(x) = (x_0 + i x_1)/(x_0^2 + x_1^2)^{1/2}.
\end{equation*}
(Here $\bT = \set{z \in \C : |z| = 1}$ is the unit circle and $\alpha(x)$ is just the argument of $x_0 + i x_1$.)

Now suppose $\phi\colon [a, b]\to A'$ is a path. Then there is a unique continuous lifting $\omega_\phi \colon [a, b] \to \R$ of the map $\alpha \circ \phi$ such that $\omega_\phi(a) \in [0, 1)$ and $p \circ \omega_\phi = \alpha \circ \phi$, where $p \colon \R \to \bT$, $p(t) = \exp(2 \pi i t)$ is the standard covering map (see, for example, \cite{Munkres2000}*{Lemma~54.1}).

The \emph{winding number} of $\phi$ is then given by $w(\phi) := \omega_\phi(b) - \omega_\phi(a)$. Note that the winding number of a path does not depend on its parametrization. If $\phi$ and $\psi$ are paths with domain $[0,1]$ whose images are contained in $A'$ such that $\psi(1) = \phi(0)$, we denote by $\phi \cdot \psi$ their concatenation and by $\phi^{-1}$ the inverse path given by $\phi^{-1}(t) = \phi(1-t)$. We note the following basic properties of the winding number that will be used repeatedly.

\begin{lemma}
  \label{l:basic-winding}
  Let $\phi$ and $\psi$ be paths $[0, 1] \to A'$ with $\psi(1) = \phi(0)$. Then the following hold:
  \begin{enumerate}
  \item \label{i:basicW:1} $w(\phi \cdot \psi) = w(\phi) + w(\psi)$;
  \item \label{i:basicW:2} $w(\phi^{-1}) = -w(\phi)$;
  \item \label{i:basicW:3} If $\im \phi \sub U$ with $U \sub A'$ simply connected and $\phi(0) = \phi(1)$, then $w(\phi) = 0$.
  \end{enumerate}
\end{lemma}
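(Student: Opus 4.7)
My plan is to derive all three items directly from the defining property of the lift $\omega_\phi$ (namely, $p\circ \omega_\phi = \alpha\circ \phi$ with $\omega_\phi(0)\in[0,1)$), together with the uniqueness of continuous lifts up to addition of an integer constant.

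For \ref{i:basicW:1}, I would reparametrize $\phi\cdot\psi$ on $[0,2]$ so that it equals $\psi$ on $[0,1]$ and $\phi(t-1)$ on $[1,2]$ (winding numbers are clearly parametrization-invariant from the definition). On $[0,1]$, $\omega_\psi$ is a lift of $\alpha\circ\psi$; on $[1,2]$, the map $t\mapsto \omega_\phi(t-1)+c$ is a lift of $\alpha\circ\phi(\cdot-1)$ for any $c$, and I pick $c=\omega_\psi(1)-\omega_\phi(0)$ so that the two pieces glue to a continuous lift $\tilde\omega$ of $\alpha\circ(\phi\cdot\psi)$. This lift differs from $\omega_{\phi\cdot\psi}$ by an integer (both are lifts), so the difference $\tilde\omega(2)-\tilde\omega(0)$ equals $w(\phi\cdot\psi)$. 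A direct computation gives $\tilde\omega(2)-\tilde\omega(0)=(\omega_\phi(1)-\omega_\phi(0))+(\omega_\psi(1)-\omega_\psi(0))=w(\phi)+w(\psi)$.

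For \ref{i:basicW:2}, the map $t\mapsto \omega_\phi(1-t)$ is a continuous lift of $\alpha\circ\phi^{-1}$, hence equals $\omega_{\phi^{-1}}$ up to an integer additive constant. Evaluating the difference at $t=1$ and $t=0$ cancels the constant and yields $w(\phi^{-1})=\omega_\phi(0)-\omega_\phi(1)=-w(\phi)$.

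For \ref{i:basicW:3}, I would use that $U\sub A'$ is simply connected and, being open in the manifold $X$ (or Hilbert cube) and contained in $A'$ (on which $\alpha$ is defined and continuous), is locally path-connected. Since $\R\xrightarrow{p}\bT$ is a covering map and $U$ has trivial fundamental group, the lifting criterion (Munkres, Theorem 54.4) provides a continuous lift $\tilde\alpha\colon U\to\R$ of $\alpha|_U$. Then $\tilde\alpha\circ\phi\colon[0,1]\to\R$ is a continuous lift of $\alpha\circ\phi$, so it differs from $\omega_\phi$ by an integer, and therefore $w(\phi)=\omega_\phi(1)-\omega_\phi(0)=\tilde\alpha(\phi(1))-\tilde\alpha(\phi(0))=0$ because $\phi(0)=\phi(1)$. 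The only mild subtlety is invoking the lifting criterion, but since local path-connectedness is automatic in our setting, this is routine; one could alternatively give a direct homotopy-lifting argument by contracting $\phi$ to a constant in $U$ and noting that the lift of a constant path is constant.
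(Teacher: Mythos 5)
Your proposal is correct, and for items (i) and (ii) it simply spells out what the paper dismisses as ``clear from the definition'': you glue the lift $\omega_\psi$ with an integer-translated copy of $\omega_\phi$ (the constant $\omega_\psi(1)-\omega_\phi(0)$ is indeed an integer because $\alpha(\psi(1))=\alpha(\phi(0))$), and you reverse the lift for the inverse path; both computations are fine. For item (iii) your main argument takes a genuinely different route from the paper: you invoke the lifting criterion to produce a global lift $\tilde\alpha\colon U\to\R$ of $\alpha|_U$ and then read off $w(\phi)=\tilde\alpha(\phi(1))-\tilde\alpha(\phi(0))=0$, whereas the paper contracts the loop $\phi$ inside $U$ and uses that $t\mapsto w(\phi_t)$ is a continuous, integer-valued function along the homotopy, hence constant. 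The trade-off is that the lifting criterion requires $U$ to be locally path-connected in addition to simply connected; you supply this by assuming $U$ is open, which is true in every application in the paper (there $U$ is always a union of links of a tube) but is not literally part of the lemma's hypotheses, while the paper's homotopy argument (and the alternative you sketch at the end, which is essentially it) needs only simple connectedness, since the homotopy-lifting property for a square requires no local hypotheses on $U$. In exchange, your approach yields a cleaner statement — a single-valued branch of the angle on $U$ — from which (iii) is immediate for every loop in $U$ at once.
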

\begin{proof}
  \ref{i:basicW:1} and \ref{i:basicW:2} are clear from the definition. For \ref{i:basicW:3}, let $(\phi_t : t \in [0, 1])$ be a homotopy of closed paths in $U$ from $\phi$ to the constant path $s \mapsto \phi(0)$. The function $t \mapsto w(\phi_t)$ is continuous and it takes only integer values, so it must be constant. This implies that $w(\phi) = 0$.
\end{proof}

\textbf{We now fix once and for all} a suitably small $\epsilon > 0$. Taking $\epsilon = 1/1000$ will suffice. Recall that $G_\eps = \set{g \in G : d_G(g, 1_G) < \eps}$. The next lemma says that the winding number is stable under perturbations smaller than $\eps$.
\begin{lemma}
	\label{l:PerturbWinding}
	Let $\phi\colon [0,1]\to A'$ be a path. Let $g\in G_\epsilon$ be such that $g\cdot \im{\phi}\subseteq A'$. Then for any $t\in [0,1]$, we have $|\omega_{g\phi}(t) - \omega_\phi(t)| < 3/2$; in particular, we have that $|w(\phi) - w(g\cdot\phi)| < 3$. 
\end{lemma}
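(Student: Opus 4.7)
My plan is to study the continuous function $f \colon [0, 1] \to \R$ defined by $f(t) = \omega_{g\phi}(t) - \omega_\phi(t)$ and show that it stays within some small $\eta < 1/2$ of a fixed integer $N \in \{-1, 0, 1\}$ throughout $[0,1]$. Once this is done, the first bound $|\omega_{g\phi}(t) - \omega_\phi(t)| < 3/2$ is immediate, and the second is the telescoping inequality $|w(\phi) - w(g\phi)| = |f(1) - f(0)| \leq |f(1)| + |f(0)| < 3$.

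The first ingredient is a Lipschitz-type estimate for $\alpha$ on $A'$. Since $(x_0^2 + x_1^2)^{1/2} \geq 1/2$ on $A'$, a direct computation (in polar coordinates, $|\nabla \alpha| = 1/r \leq 2$) shows that $\alpha \colon A' \to \bT$ is Lipschitz with respect to the Euclidean metric in the $(x_0, x_1)$-plane. In the closed manifold case, $d$ restricts to the Euclidean metric on $Z' \supseteq A'$, so this gives a Lipschitz bound in terms of $d$ directly. In the Hilbert cube case, the explicit formula $d(x,y) = \sum_n 2^{-n}|x_n-y_n|$ dominates a positive constant multiple of the $(x_0, x_1)$-Euclidean distance, so again $\alpha$ is Lipschitz with respect to $d$ on $A'$. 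Having chosen $\epsilon = 1/1000$, this gives that for all $x, y \in A'$ with $d(x,y) < \epsilon$, the angular distance between $\alpha(x)$ and $\alpha(y)$ on $\bT$ is bounded by some constant $\eta$ with $\eta < 1/2$ (in fact, $\eta$ can be taken much smaller).

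Unwinding the defining relations of the lifts yields $p(f(t)) = \alpha(g\phi(t)) \cdot \alpha(\phi(t))^{-1}$, and since $d(g\phi(t), \phi(t)) \leq d_G(g, 1_G) < \epsilon$ and both points lie in $A'$ by hypothesis, the Lipschitz bound shows that $p(f(t))$ is within angular distance $\eta$ of $1 \in \bT$. Hence each $f(t)$ lies within $\eta$ of some integer $N(t)$. As $\eta < 1/2$, the $\eta$-neighborhoods of distinct integers in $\R$ are disjoint, so by continuity of $f$ on the connected interval $[0,1]$, the integer $N(t) = N$ is constant. Finally, at $t = 0$ we have $\omega_\phi(0), \omega_{g\phi}(0) \in [0, 1)$ by the normalization of the lift, so $f(0) \in (-1, 1)$; combined with $|f(0) - N| < 1/2$, this forces $N \in \{-1, 0, 1\}$. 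Therefore $|f(t)| \leq |N| + \eta < 3/2$ for every $t \in [0,1]$, as desired. The only mildly delicate point is the Lipschitz estimate on $A'$ in the Hilbert cube setting, where the non-Euclidean nature of the metric has to be accounted for, but this is routine.
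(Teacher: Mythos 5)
Your proof is correct and rests on the same core ideas as the paper's: continuity of the lift difference $\omega_{g\phi}-\omega_\phi$, the normalization $\omega(0)\in[0,1)$, and the fact that $\epsilon$-close points of $A'$ cannot have arguments differing by nearly half a turn. The paper packages this more briefly as a first-crossing/intermediate-value contradiction at the value $3/2$ (needing only that exactly antipodal arguments are impossible for $\epsilon$-close points of $A'$), whereas you prove the slightly stronger quantitative statement that the difference stays $\eta$-close to a fixed integer in $\{-1,0,1\}$; both are fine.
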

\begin{proof}
  Suppose not. As $|\omega_{g\phi}(0) - \omega_\phi(0)| < 1$ and $|\omega_{g\phi}(t) - \omega_\phi(t)|$ is continuous, there is $t_0 \in [0, 1]$ such that $|\omega_{g\phi}(t_0) - \omega_\phi(t_0)| = 3/2$, which implies that $\alpha(\phi(t_0)) = -\alpha(g \cdot \phi(t_0))$. This contradicts the fact that $d(\phi(t_0), g \cdot \phi(t_0)) < \eps$ and that $\phi(t_0), g \cdot \phi(t_0) \in A'$.
\end{proof}

\begin{defn}
	\label{Def:Confined}
Let $\mathcal{U} := \langle U_0,...,U_{n-1}\rangle$ be a tube which is simply connected in $A'$. We say that an arc $\phi\colon [0,1]\to X$ is \emph{$(\cU, A')$-confined} if $\im{\phi}\subseteq (A'\cap \bigcup \mathcal{U})$ and
for any $k < n$ with $U_k\cap \im{\phi}\neq \emptyset$, we have $U_k\subseteq A'$.
\end{defn}

The next lemma shows that the winding number of an arc is roughly determined by any sufficiently fine tube that contains it.
\begin{lemma}
  \label{l:WindingBound}
  Suppose $\cU = \langle U_0,...,U_{n-1}\rangle$ is a tube which is simply connected in $A'$. Let $\phi$ and $\psi$ be arcs which are $(\cU, A')$-confined. Suppose further that each $U_k$ has diameter at most $1/10$, and that for some $i, j< n$ we have $\phi(0), \psi(0) \in U_i$, $\phi(1), \psi(1) \in U_j$. Then the following hold:
    \begin{enumerate}
    \item \label{i:l:WB:1} $|w(\phi)| \leq |i-j| + 1$;
    \item \label{i:l:WB:2} $|w(\phi) - w(\psi)| \leq 1$.
    \end{enumerate}
\end{lemma}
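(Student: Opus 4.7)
The plan is to exploit simple connectivity of well-chosen subtubes inside $A'$ to produce a single continuous lift $\tilde\alpha$ of $\alpha$, and then reduce both estimates to telescoping bounds on the oscillation of $\tilde\alpha$ on individual links.

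First, I analyze the set of links actually visited by $\phi$. Using continuity of $\phi$ and compactness of $[0,1]$, partition $[0,1]$ into finitely many closed subintervals, each mapped by $\phi$ into a single link, producing a sequence of indices $(k_1,\ldots,k_L)$ with $k_1=i$ and $k_L=j$. After merging adjacent intervals that use the same index, the tube axiom (two links intersect iff their indices differ by at most one) forces $|k_\ell-k_{\ell+1}|=1$; as a walk on $\Z$ from $i$ to $j$ with unit steps, the set $K_\phi:=\{k:U_k\cap\im\phi\neq\emptyset\}$ is therefore an interval of integers containing $[\min(i,j),\max(i,j)]$. Since $\phi$ is $(\cU,A')$-confined, every link indexed by $K_\phi$ lies in $A'$, so the corresponding subtube is entirely in $A'$ and hence simply connected by hypothesis; its union $V_\phi$ is a simply connected open subset of $A'$. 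The same analysis gives $K_\psi$ and $V_\psi$, and since both $K_\phi,K_\psi$ contain $[\min(i,j),\max(i,j)]$, the union $K_\phi\cup K_\psi$ is still an interval, the corresponding subtube still lies in $A'$, and $V_\phi\cup V_\psi$ is simply connected.

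On the simply connected open set $V_\phi\cup V_\psi\subseteq A'$, the map $\alpha$ admits a continuous lift $\tilde\alpha\colon V_\phi\cup V_\psi\to\R$ with $p\circ\tilde\alpha=\alpha$, and any path $\eta$ in this region satisfies $w(\eta)=\tilde\alpha(\eta(1))-\tilde\alpha(\eta(0))$. The key quantitative input is an oscillation estimate: on any single link $U_k\subseteq A'$ of diameter at most $1/10$, the lift $\tilde\alpha$ oscillates by at most some explicit constant $\delta_0<1/2$ (measured in turns). This follows from the elementary bound $|z/|z|-w/|w||\leq 2|z-w|/\min(|z|,|w|)$ on $\C\setminus\{0\}$: applied with $|(x_0,x_1)|\geq 1/2$ on $A'$ and $\mathrm{diam}(U_k)\leq 1/10$, it gives chord distance at most $2/5$ between values of $\alpha$ on $U_k$, which transfers via the covering map $p$ to a bound well under $1/2$ turns on $\tilde\alpha$.

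For \ref{i:l:WB:1}, assume without loss of generality $i\leq j$, choose intermediate points $p_k\in U_k\cap U_{k+1}$ for $i\leq k\leq j-1$, and telescope inside $V_\phi$:
\begin{equation*}
  |\tilde\alpha(\phi(1))-\tilde\alpha(\phi(0))|\leq |\tilde\alpha(\phi(0))-\tilde\alpha(p_i)|+\sum_{k=i}^{j-2}|\tilde\alpha(p_k)-\tilde\alpha(p_{k+1})|+|\tilde\alpha(p_{j-1})-\tilde\alpha(\phi(1))|.
\end{equation*}
Each of the $j-i+1$ summands involves two points sharing a common link, hence is bounded by $\delta_0$, yielding $|w(\phi)|\leq (j-i+1)\delta_0\leq |i-j|+1$. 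For \ref{i:l:WB:2}, using the common lift on $V_\phi\cup V_\psi$ one has
\begin{equation*}
  w(\phi)-w(\psi)=\bigl(\tilde\alpha(\phi(1))-\tilde\alpha(\psi(1))\bigr)-\bigl(\tilde\alpha(\phi(0))-\tilde\alpha(\psi(0))\bigr),
\end{equation*}
and since $\phi(0),\psi(0)\in U_i$ and $\phi(1),\psi(1)\in U_j$, each bracket is bounded by $\delta_0$, giving $|w(\phi)-w(\psi)|\leq 2\delta_0<1$. The step requiring the most care is the structural one: verifying that the visited subtube is a contiguous block of links lying inside $A'$, so that the simply-connected-in-$A'$ hypothesis applies and a single global lift of $\alpha$ is available; without this one would only have local lifts whose differences could pick up spurious winding from the hole of $A'$.
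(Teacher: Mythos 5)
Your proof is correct, but it takes a genuinely different route from the paper's. Both arguments hinge on the same structural reduction---the links met by $\phi$ (resp.\ by $\phi$ and $\psi$) form a contiguous block of $\cU$ whose links all lie in $A'$ by confinement, so that their union is simply connected by the simply-connected-in-$A'$ hypothesis---a step the paper compresses into ``we may assume $\im{\phi}$ meets each $U_k$'' and you spell out. After that the mechanisms diverge: the paper exploits simple connectivity only through Lemma~\ref{l:basic-winding}\ref{i:basicW:3} (null-homotopic loops have zero winding number), proving (i) by induction on $|i-j|$ after closing $\phi$ up with a short path inside a single link, and (ii) by closing the loop $\theta_0\cdot\phi\cdot\theta_1\cdot\psi^{-1}$; you instead use simple connectivity to produce a single continuous branch $\tilde\alpha$ of the argument on the union of the visited links and then telescope per-link oscillation bounds. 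Your version needs no induction, treats (i) and (ii) uniformly, and gives slightly sharper constants (roughly $(|i-j|+1)\delta_0$ and $2\delta_0<1$); the paper's version stays entirely within the winding-number toolkit it has already set up and avoids invoking the lifting criterion for maps into $\bT$. Three small points to tidy, none of which affects correctness: $K_\phi$ can be strictly larger than the set of indices of your walk (a link may be met only at an overlap point), though it is still an interval and all its links still lie in $A'$ by confinement; your explicit constant (chord diameter at most $2/5$) assumes the planar distance is bounded by $d$, which is exact on the Euclidean chart but off by a bounded factor for the Hilbert-cube metric $\sum_n 2^{-n}|x_n-y_n|$---harmless, since any per-link oscillation bound below $1/2$ suffices; and the passage from a bound on the diameter of $\alpha(U_k)$ to a bound on the oscillation of $\tilde\alpha$ over $U_k$ uses that each link is connected, which is part of the definition of a tube and deserves a word.
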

\begin{proof}
  \ref{i:l:WB:1} We may assume that $\im{\phi}$ meets each $U_k$, so $U_k\subseteq A'$ for each $k< n$ and $\cU$ is simply connected. Without loss of generality, we may assume that $r = |i - j|$ is the least number $|i' - j'|$ such that $\phi(0) \in U_{i'}$ and $\phi(1) \in U_{j'}$. We proceed by induction on $r$. For $r = 0$, we have that $\phi(0), \phi(1)\in U_i$. Let $\theta\colon [0,1]\to U_i$ be a path with $\theta(0) = \phi(1)$ and $\phi(1) = \theta(0)$. Then since $\bigcup \cU \subseteq A'$ is simply connected, we have $w(\phi \cdot \theta) = 0$. Notice that $|w(\theta)| \leq 1$ since $\im{\theta}\subseteq U_i$ and $\alpha(U_i)$ has small diameter. Therefore we also have $|w(\phi)|\leq 1$ as desired.

  Now suppose the result is true for $r = k$, and suppose $\phi$ is a path with $r = k+1$. Without loss of generality, assume that $i < j$ and note that by the choice of $r$, $\phi(0)\not\in U_{i+1}$. Let $t = \sup(\phi^{-1}(U_i))$. We must have $\phi(t)\in U_{i+1}$, so we can find an interval $(a,b)\subseteq \phi^{-1}(U_{i+1})$ with $t\in (a,b)$. Then find $s\in (a,b)$ with $\phi(s)\in U_i$. By the induction hypothesis, we have that $|w(\phi|_{[0,s]})| \leq  1$ and $|w(\phi|_{[s,1]})| \leq k+1$. Hence $|w(\phi)|\leq k + 2$ as desired.
	
\ref{i:l:WB:2} We may assume that each $U_k$ meets $\im{\phi}$ or $\im \psi$, so $U_k\subseteq A'$ for each $k< n$ and $\cU$ is simply connected. Let $\theta_0$ and $\theta_1$ be arcs with $\mathrm{Im}(\theta_0)\subseteq U_i$, $\im{\theta_1}\subseteq U_j$,  $\theta_0(0) = \psi(0)$, $\theta_0(1) = \phi(0)$, $\theta_1(0) = \phi(1)$ and $\theta_1(1) = \psi(1)$. Then since $\bigcup \mathcal{U}\subseteq A'$ is simply connected, we have $w(\theta_0 \cdot \phi \cdot \theta_1 \cdot \psi^{-1}) = 0$. Since $|w(\theta_i)|< 1/2$ for $i = 0, 1$, the result follows.
\end{proof}

We are now ready to prove our main results.
\begin{proof}[Proof of Theorem~\ref{th:i:dimension2}]
As already noted, when $m \geq 3$, Theorem~\ref{th:i:dimension2} follows from Theorem~\ref{th:i:dimension3}. So now we will assume that $m = 2$ and we will aim to apply Fact~\ref{Fact:HPNonmetrizable}. This case will be simpler than proving meager orbits for dimension $m\geq 3$, and we will not need all of the machinery we have developed so far. We will also be much more explicit with the various arcs and tubes we create.

First for $t\in [0,1]$, we set $r(t) = 9/12 + t/2$. Notice that for $t\in [0,1]$, we have $1/\sqrt{2} < r(t) < \sqrt{3}/2$, so $r(t)$ is a radius between the inner and outer radius of the annulus $A$. For each $N\in \N \setminus \set{0}$, define the arc $\phi_N\colon [0,1]\to A$, using polar coordinates, as follows:

\begin{align*}
	\phi_N(t) =
\begin{cases}
r(t) \exp(6\pi iNt) \quad &\text{if } 0 \leq t \leq 1/3,\\[1mm]
r(t) \quad &\text{if } 1/3\leq t \leq 2/3,\\[1mm]
r(t) \exp(-12\pi iNt)\quad &\text{if } 2/3 \leq t \leq 1.
\end{cases}
\end{align*}
Thus $\phi_N$ winds counterclockwise $N$ times, then clockwise $2N$ times, while slowly expanding in radius. More explicitly, the winding function of $\phi_N$ is
\begin{align*}
w_{\phi_N}(t) =
\begin{cases}
3Nt \quad &\text{if } 0 \leq t \leq 1/3,\\[1mm]
N \quad &\text{if } 1/3\leq t \leq 2/3,\\[1mm]
N-6N(t-2/3)\quad &\text{if } 2/3 \leq t \leq 1.
\end{cases}
\end{align*}
In particular, we have $w(\phi_N) = -N$.

Let $\mathcal{U}_N = \langle (U_N)_0,...,(U_N)_{j_N-1}\rangle$ be a simply connected tube compatible with $\phi_N$ such that each $(U_N)_j$ has diameter at most $1/100$ and $\bigcup \cU_N\subseteq A$. (See Figure~\ref{fig:tube-U2} on p.~\pageref{fig:tube-U2}.)

For the following lemma, recall that we have fixed $\epsilon = 1/1000$.

\begin{lemma}
	\label{Lem:Winding2D}
	Suppose $\psi$ is a ray contained in $O(\mathcal{U}_N)$, and let $g\in G_\epsilon$. Let
\begin{equation*}
  s = \max\{t\geq 0: (g\cdot \psi)([0,t])\subseteq A'\}
\end{equation*}
   and let $N\geq 100$. Then the following hold:
	\begin{enumerate}
		\item
		\label{i:l:2D:1}
		$\omega_{g\psi}\colon [0,s]\to \R$ does not reach $3N/2$ before reaching $-N/2$;
		\item
		\label{i:l:2D:2}
		$\omega_{g\psi}\colon [0,s]\to \R$ reaches $3N/4$ before reaching $-N/4$.
	\end{enumerate}
\end{lemma}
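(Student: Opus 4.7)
The plan is to show that $\omega_{g\psi}$ on $[0,s]$ inherits, up to an $O(1)$ error, the characteristic ``rise from $0$ to $N$, then fall to $-N$'' shape of $\omega_{\phi_N}$, and then read off both claims by intermediate value arguments. First I would establish that, since $\psi$ is a ray in $O(\mathcal{U}_N)$ and the tube property forces movements between adjacent links only, $\psi$ traverses $U_0, U_1, \ldots, U_{j_N-1}$ in order. Let $\tau_k$ be the first time $\psi$ enters $U_k$ and set $T := \tau_{j_N-1}$. Then $\psi([0, T]) \subseteq \bigcup \mathcal{U}_N \subseteq A$, and since $A$ lies in the interior of $A'$ with margin much larger than $\epsilon = 1/1000$, we get $g\psi([0, T]) \subseteq A'$, so $T \leq s$. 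Throughout $[0, T]$ both $\psi(t)$ and $g\psi(t)$ lie in $A'$, so Lemma~\ref{l:PerturbWinding} yields $|\omega_{g\psi}(t) - \omega_\psi(t)| < 3/2$.

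Next I would track $\omega_\psi$ by $\omega_{\phi_N}$. Applying Lemma~\ref{l:WindingBound}\ref{i:l:WB:2} with the sub-tube $\langle U_0, \ldots, U_k \rangle$ to the pair $\phi_N|_{[0, s_k]}$ (where $s_k$ is the first time $\phi_N$ enters $U_k$) and $\psi|_{[0, \tau_k]}$, both of which are $(\langle U_0,\ldots,U_k\rangle, A')$-confined and have endpoints in $U_0$ and $U_k$, yields $|\omega_\psi(\tau_k) - \omega_{\phi_N}(s_k)| \leq 2$. Combining this with the small link diameter (at most $1/100$), which keeps the angular variation of $\omega_\psi$ negligible while $\psi$ stays in a single link or moves between two adjacent ones, I obtain an absolute constant $C$ such that $|\omega_\psi(t) - \omega_{\phi_N}(s_{k(t)})| \leq C$ for every $t \in [0, T]$, where $k(t)$ is any index with $\psi(t) \in U_{k(t)}$. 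Setting $T_0 := \tau_{k^+}$ for some link index $k^+$ with $s_{k^+}$ near $1/3$, the shape of $\omega_{\phi_N}$ then gives $\omega_\psi(t) \in [-C, N+C]$ on $[0, T_0]$ with $\omega_\psi(T_0) \in [N-C, N+C]$, and $\omega_\psi(t) \in [-N-C, N+C]$ on $[0, T]$ with $\omega_\psi(T) \in [-N-C, -N+C]$. Transferring these bounds to $\omega_{g\psi}$ via the $3/2$ perturbation closes the setup.

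For \ref{i:l:2D:2}, the lower bound $\omega_{g\psi}(T_0) \geq N - C - 3/2 > 3N/4$ (valid for $N \geq 100$) combined with $\omega_{g\psi}(0) \in [0,1)$ forces $\omega_{g\psi}$ to attain $3N/4$ at some $t_1 \in [0, T_0]$ by the intermediate value theorem; and on $[0, T_0]$ we have $\omega_{g\psi}(t) > -C - 3/2 > -N/4$, so $-N/4$ cannot be reached before $t_1$. For \ref{i:l:2D:1}, on $[0, T]$ one has $\omega_{g\psi}(t) < N + C + 3/2 < 3N/2$, so any time $t_0 \in [0, s]$ with $\omega_{g\psi}(t_0) = 3N/2$ must satisfy $t_0 > T$; but $\omega_{g\psi}(T_0) \approx N$ and $\omega_{g\psi}(T) \approx -N$, so by the intermediate value theorem on $[T_0, T]$ there is a time $t' \leq T < t_0$ at which $\omega_{g\psi}(t') = -N/2$.

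The main obstacle is the pointwise tracking in the second paragraph: combining the cumulative-winding estimate from Lemma~\ref{l:WindingBound}\ref{i:l:WB:2} at each entry time $\tau_k$ with a continuity argument inside and between links to obtain uniform control of $\omega_\psi - \omega_{\phi_N}$. A minor subtlety is that $\omega_\psi(0) \in [0,1)$ may be close to either endpoint of the interval (since $\alpha(\psi(0))$ is close to $1$), but any such discrepancy is absorbed into $C$.
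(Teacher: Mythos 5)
Your overall route is the same as the paper's: compare $\omega_\psi$ with $\omega_{\phi_N}$ via Lemma~\ref{l:WindingBound}~\ref{i:l:WB:2}, transfer the bounds to $g\psi$ with Lemma~\ref{l:PerturbWinding}, and finish with intermediate value arguments; your setup ($T\le s$ because $\bigcup\mathcal{U}_N\subseteq A$ and $\epsilon$ is small compared to the gap between $A$ and $A'$) and your endgame, including the treatment of times in $(T,s]$, are fine. The gap is exactly at the step you flag yourself: the derivation of the uniform estimate $|\omega_\psi(t)-\omega_{\phi_N}(s_{k(t)})|\le C$ by anchoring at the first-entrance times $\tau_k$ and then appealing to small link diameter. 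Between $\tau_k$ and $\tau_{k+1}$ the ray is only constrained to lie in $U_0\cup\cdots\cup U_k$; nothing forces it to ``stay in a single link or move between two adjacent ones''. It can backtrack through many links (say from $U_k$ down to $U_1$ and back before first entering $U_{k+1}$), during which $\omega_\psi$ changes by an amount comparable to $N$. So continuity plus the values at the anchors does not control $\omega_\psi$ at intermediate times, and the proposed gluing does not go through as stated.

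The estimate you want is nonetheless true, and the repair is immediate---and is in effect what the paper does. For each individual $t\le T$, the arc $\psi|_{[0,t]}$ is $(\mathcal{U}_N,A')$-confined, starts in $U_0$ and ends in $U_{k(t)}$, and the same holds for $\phi_N|_{[0,a]}$ for any $a$ with $\phi_N(a)\in U_{k(t)}$ (take $a=s_{k(t)}$ if you want your tracked quantity); a single application of Lemma~\ref{l:WindingBound}~\ref{i:l:WB:2} at this $t$ gives $|\omega_\psi(t)-\omega_{\phi_N}(a)|\le 2$, with no gluing over $k$ at all. In fact the paper does not track pointwise: it applies this comparison only at a hypothetical time $p$ where $\omega_\psi(p)$ would be too large (or too small), and uses the global bounds $-N\le\omega_{\phi_N}\le N$ together with the value $N$ reached at the end of the counterclockwise phase to derive a contradiction. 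With this substitution (so $C=2$), your bounds on $[0,T_0]$ and $[0,T]$, the $3/2$-perturbation transfer, and the two intermediate value arguments all go through, and the rest of your write-up matches the paper's proof.
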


\begin{proof}
	For \ref{i:l:2D:1}, let $q\leq s$ be such that $\psi|_{[0,q]}$ is compatible with $\mathcal{U}_N$. Then by Lemma~\ref{l:WindingBound}~\ref{i:l:WB:2}, we have $|w(\psi|_{[0,q]}) - w(\phi_N)|\leq 1$. In particular, we have $\omega_\psi(q) \leq 2-N < -3N/4$. Towards a contradiction, suppose that for some $p\leq q$ we have $\omega_\psi(p) \geq 5N/4$. Find $a\in [0,1]$ and  $j < j_N$ such that $\phi_N(a), \psi(p)\in (U_N)_j$. Applying Lemma~\ref{l:WindingBound}~\ref{i:l:WB:2} to $\phi_N|_{[0, a]}$ and $\psi|_{[0, p]}$, we see that this is not possible. To transfer this observation about $\psi$ into one about $g\psi$, we note that $\im{\psi}\subseteq A'$ and $\im{g\psi}\subseteq A'$, letting us apply Lemma~\ref{l:PerturbWinding} to conclude that $|\omega_{g\psi}(t) - \omega_\psi(t)|< 3/2$ for any $t\in [0,1]$. So $\omega_{g\psi}(q)\leq 4-N < -3N/4$, and for any $p\leq q$, we have $\omega_{g\psi}(p)< 5N/4+3/2 < 3N/2$. 
	
	For \ref{i:l:2D:2}, let $\mathcal{V}_N$ be an initial segment of $\mathcal{U}_N$ which is compatible with $\phi_N|_{[0,1/3]}$. Let $q\leq s$ be such that $\psi|_{[0,q]}$ is compatible with $\cV_N$. Then by Lemma~\ref{l:WindingBound}~\ref{i:l:WB:2}, we must have $w_\psi(q)\geq N-1 > 7N/8$. Showing that we do not have $w_\psi(p) \leq -3N/8$ for any $p\leq q$ is done as in the proof above, and then we apply Lemma~\ref{l:PerturbWinding} as before.
\end{proof}

Using Lemma~\ref{Lem:Winding2D}, we see that if we set $U_n := O(\cU_{100\cdot 2^n})\subseteq M$, then $U_n$ is non-empty, open and the sets $\{G_\epsilon\cdot U_n: n \in \N\}$ are pairwise disjoint. We now apply Fact~\ref{Fact:HPNonmetrizable} to conclude the proof of Theorem~\ref{th:i:dimension2} in the case $m = 2$.
\end{proof}  % Theorem

Recall that if $m \geq 3$, then $X$ is \emph{strongly $\R$-inseparable (SRI)}, i.e., for any non-empty, open and connected $U\subseteq X$ and any continuous injection\footnote{Note that the injection is into $X$, not $U$.} $\phi\colon [0,1]\to X$, the set $U\setminus \im{\phi}$ is connected and non-empty (see \cite{Gutman2008}*{Theorem~A.3}).

\begin{proof}[Proof of Theorem~\ref{th:i:dimension3}]
Now we consider the case where $m\geq 3$ and we aim to apply Fact~\ref{Fact:MeagerOrbits} with $U = M$. So let $V\subseteq M$ be non-empty, open. Find some ray $c\in V$; by reparametrizing, we may assume that $c|_{[0,1]}$ is an arc such that every ray extending $c|_{[0,1]}$ belongs to $V$ and such that $c(1)\in \mathrm{Int}(B)$, where $B$ is the smallest annulus defined after Lemma~\ref{l:pw-linear-Z}. Let $\cU$ be any tube compatible with $c|_{[0,1]}$ with $O(\cU)\subset V$. By Lemma~\ref{l:pw-linear-Z}~\ref{i:pwlZ:1}, we find $\phi\colon [0,1]\to X$ an arc compatible with $\cU$ and piecewise linear in $Z$, and by Lemma~\ref{l:pw-linear-Z}~\ref{i:pwlZ:3}, we may refine $\cU$ to some $\cV := \langle V_0,...,V_{n-1}\rangle$ compatible with $\phi$ so that $\mathcal{V}$ is simply connected in $Z$, each $V_i$ is of diameter at most $\epsilon$, and $V_{n-1}\subseteq B$.

We will create two smaller neighborhoods contained in $O(\mathcal{V})$ by extending the arc $\phi$ in two different ways to arcs $\phi_0, \phi_1\colon [0,N]\to X$, where we have $\phi_i|_{[0,t]} = \phi|_{[0,t]}$ for some $t < 1$ with $\phi(t)\in V_{n-1}$ and $N = 100 n$. We first divide the annulus $B$ into four overlapping regions:
\begin{align*}
B_0 =& \{x \in B: x_0 \geq 0\},\\
B_1 =& \{x \in B: x_1 \geq 0\},\\
B_2 =& \{x \in B: x_0 \leq 0\},\\
B_3 =& \{x \in B: x_1\leq 0\}.
\end{align*}

We may assume that $V_{n-1}\subseteq B_0\cap B_1$. First we focus on constructing $\phi_0$. Fix a list of distinct points $y^0_1,...,y^0_N\in B\setminus \phi([0,1))$ with $y^0_1 = \phi(1)$ such that $y^0_i \in \mathrm{Int}(B_{i-1}\cap B_i)$, where the index $i$ in $B_i$ is interpreted mod $4$. The path $\phi_0$ will follow $\phi$ and then essentially visit all points $y^0_i$ in order. However, because we need $\phi_0$ to be injective, some care is needed.

Our construction of $\phi_0$ will proceed by building auxiliary arcs $\psi^0_i\colon [0,i]\to X$ for $1\leq i\leq N$. We will require that $\psi^0_i(i) = y^0_i$ for all $i$. To start, set $\psi^0_1 = \phi$. Next suppose that $\psi^0_k$ has been constructed. Find some $s^0_k$ with $k-1 < s^0_k < k$ such that $\psi^0_k([s^0_k, k])\subseteq \mathrm{Int}(B_{k-1}\cap B_k)$. If $k = 1$, we additionally demand that $\psi^0_k([s^0_k, k])\subseteq V_{n-1}$. Since $X$ is SRI,
$$A^0_k:= \mathrm{Int}(B_k)\setminus \left(\psi^0_k([0, s^0_k])\cup \{y^0_{k+2},...,y^0_N\}\right)$$
is connected, and by induction we will have that $y^0_k, y^0_{k+1}\in A^0_k$. Let $\theta^0_k\colon [0,1]\to A^0_k$ be a piecewise linear arc with $\theta^0_k(0) = y^0_k$ and $\theta^0_k(1) = y^0_{k+1}$. Let $t^0_k > s^0_k$ be minimal such that for some $a^0_k\in [0,1]$, we have $\psi^0_k(t^0_k) = \theta^0_k(a^0_k)$. The arc $\psi^0_{k+1}$ will be the concatenation $\psi^0_k|_{[0,t^0_k]} \cdot \theta^0_k|_{[a^0_k, 1]}$, where we parametrize as follows:
\begin{itemize}
	\item
	$\psi^0_{k+1}|_{[0,k-1]} = \psi^0_k|_{[0,k-1]}$;
	\item
	$\psi^0_{k+1}|_{[k-1,k]}$ is a reparametrization of $\psi^0_k|_{[k-1, t^0_k]}$;
	\item
	$\psi^0_{k+1}|_{[k, k+1]}$ is a reparametrization of $\theta^0_k|_{[a^0_k, 1]}$.
\end{itemize}
We conclude by setting $\phi_0 = \psi^0_N$ and note the properties of $\phi_0$ that we will need:
\begin{itemize}
	\item
	$\phi_0|_{[0,1]}$ is a reparametrization of $\phi|_{[0,t^0_1]}$ and is contained in the tube $\mathcal{V}$, with $\phi_0(0) \in V_0$ and $\phi_0(1)\in V_{n-1}$;
	\item
	$\phi_0(i)\in \mathrm{Int}(B_i\cap B_{i-1})$ for $1\leq i\leq N$;
	\item
	$\phi_0([i, i+1])\subseteq \mathrm{Int}(B_i)$ for $1\leq i\leq N-1$.
\end{itemize}
The arc $\phi_1$ is formed in a similar fashion, except that we travel the other way around the annulus. More precisely, we fix a list of distinct points $y^1_1,...,y^1_N\in B\setminus \phi([0,1))$ with $y^1_1 = \phi(1)$ such that $y^1_i\in \mathrm{Int}(B_{2-i}\cap B_{1-i})$. The procedure for building the auxiliary arcs $\psi^1_i$ is nearly identical, and we omit the details. We set $\phi_1 = \psi^1_N$, and note the following properties of $\phi_1$:
\begin{itemize}
	\item
	$\phi_1|_{[0,1]}$ is a reparametrization of $\phi|_{[0, t^1_1]}$ and is contained in the tube $\mathcal{V}$, with $\phi_1(0) \in V_0$ and $\phi_1(1)\in V_{n-1}$;
	\item
	$\phi_1(i)\in \mathrm{Int}(B_{2-i}\cap B_{1-i})$ for $1\leq i\leq N$;
	\item
	$\phi_1([i, i+1])\subseteq \mathrm{Int}(B_{1-i})$ for $1\leq i\leq N-1$.
  \end{itemize}
  We also note that both $\phi_0$ and $\phi_1$ are piecewise linear in $Z$.

Next we construct two tubes $\mathcal{T}_0$ and $\mathcal{T}_1$ compatible with the arcs $\phi_0$ and $\phi_1$ for which we will show that $O(\cT_0) \cap G_\eps \cdot O(\cT_1) = \emptyset$. The construction is very similar to the constructions in Lemma~\ref{l:pw-linear-Z} \ref{i:pwlZ:1} and \ref{i:pwlZ:2}. We can arrange so that the open sets appearing in each $\mathcal{T}_i$ have diameter less than $\epsilon$. Furthermore, since $\phi_0$ and $\phi_1$ share an initial segment, we can arrange that $\mathcal{T}_i = \mathcal{W}^\frown \mathcal{R}_i$, where $\mathcal{W} = \langle W_0,...,W_{r-1}\rangle$ refines $\cV$ and is compatible with $\phi|_{[0,t^0_1]}$, where without loss of generality we suppose that $t^0_1 \leq t^1_1$. We can also arrange that $W_{r-1}\subseteq V_{n-1}$. This will imply that $O(\mathcal{T}_i)\subseteq O(\mathcal{V})$ for $i = 0,1$. We also write $\mathcal{R}_0 = \langle
(R_0)_0,...,(R_0)_{r_0-1}\rangle$, $\mathcal{R}_1 = \langle (R_1)_0,...,(R_1)_{r_1-1}\rangle$ and require that $(R_i)_j\subseteq B$ for $i = 0,1$ and $j < r_i$. By choosing the $s^i_1$ close enough to $\phi(1)$, we can also arrange that $\phi([t^0_1, t^1_1])\subseteq (R_1)_0$. In particular, each $\phi_i|_{[1, N]}$ is an arc with $\im{\phi_i}\subseteq \bigcup \mathcal{R}_i$ and satisfies $\phi_i(1)\in (R_i)_0$ and $\phi_i(N)\in (R_i)_{r_i-1}$. Finally, as $\phi_0$ and $\phi_1$ are piecewise linear in $Z$, by Lemma~\ref{l:pw-linear-Z}, $\cT_0$ and $\cT_1$ can be taken to be simply connected in $Z$.

\begin{lemma}
	\label{Lem:LargeWinding}
	Let $i \in \set{0, 1}$ and suppose that $\psi\colon [0,1]\to B$ is an arc compatible with $\cR_i$. Then $(-1)^i w(\psi)\geq N/4-2$.
\end{lemma}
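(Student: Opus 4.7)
The strategy is to reduce, via Lemma~\ref{l:WindingBound}~\ref{i:l:WB:2}, to bounding the winding number of $\phi_i|_{[1, N]}$ itself. I will treat the case $i = 0$ in detail; the case $i = 1$ is symmetric, since $\phi_1$ visits the quadrants in the reverse cyclic order.

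To bound $w(\phi_0|_{[1, N]})$, consider the unique continuous lift $f \colon [1, N] \to \R$ of $\alpha \circ \phi_0|_{[1, N]}$ with $f(1) \in [0, 1/4]$; this normalization is possible because $\phi_0(1) \in \Int(B_0 \cap B_1)$ and $p([0, 1/4]) = \alpha(B_0 \cap B_1)$. For each $k$, the half-annulus $B_k$ maps under $\alpha$ onto a half-circle in $\bT$, while the corner $B_k \cap B_{k-1}$ maps onto a quarter-arc, namely the image under $p$ of a rotation of $[0, 1/4]$ by $(k-1)/4$. Since $\phi_0([k, k+1]) \sub \Int(B_k)$ and the preimage of a half-circle under $p$ is a disjoint union of length-$1/2$ intervals, $f|_{[k, k+1]}$ is confined to the unique such interval containing $f(k)$. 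A straightforward induction on $j$ then gives $f(4j + m) \in [j + (m-1)/4, j + m/4]$ for $m \in \{1, 2, 3, 4\}$. In particular, writing $N = 4j + m$, we get $f(N) \geq (N-1)/4$, so $w(\phi_0|_{[1, N]}) = f(N) - f(1) \geq N/4 - 1/2$.

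For the comparison with $\psi$, observe that $\cR_0$ is a subtube of $\cT_0$ all of whose links lie in $B \sub Z$; since $\cT_0$ is simply connected in $Z$, $\cR_0$ is genuinely simply connected, hence simply connected in $A'$. Each link has diameter at most $\epsilon < 1/10$ and lies in $B \sub A'$. Both $\phi_0|_{[1, N]}$ (reparametrized to $[0, 1]$) and $\psi$ have image in $B$, so they are $(\cR_0, A')$-confined; by compatibility with $\cR_0$, both arcs start in $(R_0)_0$ and end in $(R_0)_{r_0 - 1}$. Lemma~\ref{l:WindingBound}~\ref{i:l:WB:2} yields $|w(\psi) - w(\phi_0|_{[1, N]})| \leq 1$, so $w(\psi) \geq N/4 - 3/2 \geq N/4 - 2$, completing the case $i = 0$.

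The main obstacle is the per-quadrant bookkeeping for $f$: within each simply connected half-annulus $B_k$ the continuous lift of $\alpha$ is pinned down up to an overall integer shift, and combining this with the known angular image of each corner $B_k \cap B_{k-1}$ forces the per-step winding contribution into $[0, 1/2]$. Handling all residues of $N$ modulo $4$ uniformly is what dictates the initial normalization $f(1) \in [0, 1/4]$ and the precise form of the inductive bound.
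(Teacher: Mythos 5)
Your proof is correct and follows essentially the same route as the paper: both reduce via Lemma~\ref{l:WindingBound}~\ref{i:l:WB:2} to bounding the winding of the model arc $\phi_i|_{[1,N]}$ (using that $\psi$ and $\phi_i|_{[1,N]}$ are both $(\cR_i,A')$-confined with matching first and last links) and then exploit the quadrant structure $\phi_0([k,k+1])\subseteq \mathrm{Int}(B_k)$, $\phi_0(k)\in \mathrm{Int}(B_k\cap B_{k-1})$ to show the winding accumulates at rate about $1/4$ per step. Your explicit induction on the lift $f$, pinning $f(k)$ into $[(k-1)/4,k/4]$, is just a more detailed rendering of the paper's increment bookkeeping ($w(\phi_0|_{[k,k+1]})>0$, $w(\phi_0|_{[k,k+2]})>1/4$, and $w(\phi_0|_{[k,k+4q]})$ within $1/4$ of an integer), and it even gives the marginally sharper bound $w(\phi_0|_{[1,N]})\geq N/4-1/2$.
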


\begin{proof}
  We only consider the case $i = 0$. By Lemma~\ref{l:WindingBound} \ref{i:l:WB:2}, it suffices to show that $w(\phi_0|_{[1, N]})\geq N/4-1$. To show this, we first note that $w(\phi_0|_{[k, k+1]}) > 0$ for each $1\leq k \leq N-1$ and that $w(\phi_0|_{[k, k+2]}) > 1/4$ for each $1\leq k\leq N-2$. Second, we note that $w(\phi_0|_{[k, k+4q]})$ is within $1/4$ of an integer for any integer $q\geq 0$. From this, it follows that $w(\phi_0|_{[k, k+4q]})\geq q-1/4$, and the lemma follows.
\end{proof}

\begin{lemma}
	\label{LargeWindingPerturbation}
	Let $i \in \set{0, 1}$ and let $\psi\colon [0,1]\to B$ be an arc such that $\im{\psi}\subseteq \bigcup \mathcal{R}_i$ with $\psi(0)\in (R_i)_0$ and let $g\in G_\epsilon$. Then $(-1)^i w(g\cdot\psi) \geq -6$.
\end{lemma}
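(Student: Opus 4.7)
The plan is to handle the case $i = 0$ (the case $i = 1$ is symmetric with all winding signs reversed) by comparing $\psi$ to a canonical sub-arc of $\phi_0$, lower-bounding its winding using the half-annulus decomposition $B_0, B_1, B_2, B_3$, and then transferring the bound to $g \cdot \psi$ via Lemma~\ref{l:PerturbWinding}.

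First I pick $j$ with $\psi(1) \in (R_0)_j$ and let $b \geq 1$ be the least time with $\phi_0(b) \in (R_0)_j$. Since $\mathcal{T}_0$ is simply connected in $Z$ and $\mathcal{R}_0$ is a subtube of $\mathcal{T}_0$ with $\bigcup \mathcal{R}_0 \subseteq B \subseteq Z$, the tube $\mathcal{R}_0$ is itself simply connected -- in particular simply connected in $A'$. Both $\psi$ and $\phi_0|_{[1,b]}$ are $(\mathcal{R}_0, A')$-confined arcs from $(R_0)_0$ to $(R_0)_j$, and each link has diameter at most $\epsilon < 1/10$, so Lemma~\ref{l:WindingBound}\ref{i:l:WB:2} will yield $|w(\psi) - w(\phi_0|_{[1,b]})| \leq 1$.

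The central step is to bound $w(\phi_0|_{[1,b]})$ from below. Writing $b = k + s$ with $k \in \{1, \ldots, N-1\}$ and $s \in [0,1)$, I decompose
\[
w(\phi_0|_{[1,b]}) \;=\; \sum_{\ell=1}^{k-1} w(\phi_0|_{[\ell,\ell+1]}) \;+\; w(\phi_0|_{[k,b]}).
\]
By construction each $\phi_0|_{[\ell,\ell+1]}$ is contained in $\mathrm{Int}(B_{\ell \bmod 4})$, a simply connected half-annulus (or its higher-dimensional pull-back) disjoint from the annulus center, so by homotopy-invariance of winding, the value of any such winding is determined by its endpoints. The corner constraints $\phi_0(\ell) \in \mathrm{Int}(B_{\ell-1} \cap B_\ell)$ and $\phi_0(\ell+1) \in \mathrm{Int}(B_\ell \cap B_{\ell+1})$ put $\phi_0(\ell+1)$ strictly counterclockwise from $\phi_0(\ell)$ in the angular range of length $\pi$ of $B_\ell$, giving $w(\phi_0|_{[\ell,\ell+1]}) \in (0, 1/2)$. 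The leftover satisfies $|w(\phi_0|_{[k,b]})| \leq 1/2$ by the same reasoning applied to a partial traverse of $B_{k \bmod 4}$. Summing then produces $w(\phi_0|_{[1,b]}) > -1/2$.

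Finally, $B$ lies at distance much greater than $\epsilon$ from $\partial A'$, so $g \cdot \im{\psi} \subseteq A'$, and Lemma~\ref{l:PerturbWinding} yields $|w(g\psi) - w(\psi)| < 3$. Chaining the three inequalities gives
\[
w(g\psi) \;>\; w(\psi) - 3 \;\geq\; w(\phi_0|_{[1,b]}) - 4 \;>\; -9/2 \;>\; -6,
\]
as required. The main obstacle is the middle step: one must extract a lower bound on winding from purely local data, and it is the fact that each $B_\ell$ is simply connected in $X$ while avoiding the annulus center that converts the intuition "$\phi_0$ winds monotonically counterclockwise" into a rigorous quantitative bound on partial traversals.
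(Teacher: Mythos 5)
Your proposal is correct and follows essentially the same route as the paper: compare $\psi$ with an initial segment $\phi_0|_{[1,b]}$ via Lemma~\ref{l:WindingBound}\ref{i:l:WB:2}, lower-bound the winding of that segment using the overlapping half-annuli $B_0,\dots,B_3$, and transfer to $g\cdot\psi$ by Lemma~\ref{l:PerturbWinding}. The only (harmless) difference is that you sum the positive unit-interval windings directly to get $w(\phi_0|_{[1,b]})>-1/2$, whereas the paper quotes the estimate $w(\phi_0|_{[1,k]})\geq k/4-2$ from the proof of Lemma~\ref{Lem:LargeWinding} together with the same $1/2$-correction on the final partial traverse.
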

\begin{proof}
  We only consider the case $i = 0$. Suppose that $\psi(1)\in (R_0)_j$. We can find some $s\in [1, N]$ with $\phi_0(s)\in (R_0)_j$, and for some integer $k< N$, we have $s\in [k, k+1)$. Using the proof of Lemma~\ref{Lem:LargeWinding}, we have $w(\phi_0|_{[1, k]}) \geq k/4 - 2 \geq -2$. We also have $\phi_0([k, k+1])\subseteq \mathrm{Int}(B_{k-1})$, implying that $|w(\phi_0|_{[k, s]})| < 1/2$. Combining these and using Lemmas~\ref{l:WindingBound} and \ref{l:PerturbWinding} gives the result.
\end{proof}

We are finally in position to conclude the proof of Theorem~\ref{th:i:dimension3}. Towards a contradiction, suppose that for some $g\in G_\epsilon$, $O(\mathcal{T}_0)\cap g\cdot O(\mathcal{T}_1)\neq \emptyset$. By the density of rays in $M$, let $\rho \colon [0, \infty) \to X$ be a ray belonging to this intersection. Without loss of generality, we may assume that $\rho$ visits $(R_0)_{r_0-1}$ not later than $g \cdot (R_1)_{r_1-1}$ (otherwise, exchange $\cT_0$ and $\cT_1$ and consider $g^{-1}$ instead of $g$). This implies that there is an arc $\psi\colon [0,1]\to X$ (one can take a reparametrization of an initial segment of $\rho$) with the following properties:
\begin{itemize}
\item $\psi$ is compatible with $\cT_0$;
\item $\im \psi \sub \bigcup g \cT_1$.
\end{itemize}
We define $a\in [0,1]$ as follows: if $\im{\psi}\subseteq A$, set $a = 0$; otherwise, set $a = \sup(\psi^{-1}(X\setminus A))$. In any case, note that $a < 1$ and that $\psi(a)\in W_i$ for some $i< r$. Since $\psi([a, 1])\subseteq A$ and the links of $\cT_0$ have small diameter, we have that $\psi|_{[a,1]}$ is $(\cT_0, A')$-confined. Let $b = \sup(\psi^{-1}(X\setminus \bigcup \mathcal{R}_0))$ and note that $a < b < 1$. Also note that $\psi(b)\in W_{r-1}$.

Next we show that $|w(\psi|_{[a,b]})| \leq n+1$. First observe that there is a path $\theta\colon [0,1]\to W_i\cupdots W_{r-1}$ with $\theta(0) = \psi(a)$ and $\theta(1) = \psi(b)$. Then note that $W_i\cupdots W_{r-1}\subseteq V_j\cupdots V_{n-1}$ for some $j< n$ with $W_i\subseteq V_j$. Since each $V_q$ meets $A$ and has diameter at most $\epsilon$, we have $V_q\subseteq A'$, so that $\theta$ is $(\langle V_j,...,V_{n-1}\rangle, A')$-confined. Now we can apply Lemma~\ref{l:WindingBound} to conclude that $|w(\theta)|\leq n$. Next observe that both $\theta$ and $\psi|_{[a,b]}$ are $(\cT_0, A')$-confined and apply Lemma~\ref{l:WindingBound} again to conclude that $|w(\psi|_{[a,b]})| \leq n + 1$.

We now note that by Lemma~\ref{Lem:LargeWinding}, we have $w(\psi|_{[b,1]}) \geq N/4-2$. In particular, combining this with our previous observation, recalling that $N = 100 n$, we see that
\begin{equation}
  \label{eq:psi1}
  w(\psi|_{[a,1]}) \geq N/5.
\end{equation}

Next note that since $g^{-1} A' \sub Z$, we have that $g\cT_1$ is simply connected in $A'$. Let $c = \sup(\psi^{-1}(X\setminus \bigcup g \mathcal{R}_1))$. Note that as $\bigcup g \cR_1$ is at positive distance from $X \sminus A$, we must have $\psi(a)\not\in \bigcup g \mathcal{R}_1$ and $a < c\leq 1$. It follows that $\psi(a)\in gW_k$ for some $k < r$.

Next we show that $|w(\psi_{[a,c]})| \leq n + 1$ by an argument similar to the one above. There is a path $\eta\colon [0,1]\to gW_k\cupdots gW_{r-1}$ with $\eta(0) = \psi(a)$ and $\eta(1) = \psi(c)$. For some $\ell < n$ with $W_k\subseteq V_\ell$, we have $gW_k\cupdots gW_{r-1}\subseteq gV_\ell\cupdots gV_{n-1}$. Since each $gV_q$ touches $A$ and has diameter at most $3\epsilon$, we see that $\eta$ is $(\langle gV_\ell,...,gV_{n-1}\rangle, A')$-confined, so Lemma~\ref{l:WindingBound} implies that $|w(\eta)| \leq n$. As both $\eta$ and $\psi|_{[a,c]}$ are $(g\cT_1, A')$-confined, Lemma~\ref{l:WindingBound} yields that $|w(\psi|_{[a,c]})| \leq n+1$.

Finally, by Lemma~\ref{LargeWindingPerturbation}, we have $w(g^{-1} \psi|_{[c,1]}) \leq 6$. By Lemma~\ref{l:PerturbWinding}, $w(\psi|_{[c,1]}) \leq 9$. Hence
\begin{equation*}
  w(\psi|_{[a,1]}) = w(\psi|_{[a,c]}) + w(\psi|_{[c,1]}) < n + 1 + 9 < N/5,
\end{equation*}
contradicting \eqref{eq:psi1}. This concludes the proof of the first assertion of the theorem. The second follows from \cite{Angel2014}*{Proposition~14.1}.
\end{proof} % Theorem

%%%%%%%%%%%%%%%%%%%%%%%%%%%%%%%%%%%%%%%%%%%%%%%%%%

\section{A comeager set of chains in the Hilbert cube}
\label{sec:comeager-set-chains}

In this section, we describe what the generic connected chain in the Hilbert cube looks like. We denote by $Q = [0, 1]^\N$ the Hilbert cube and by $\cC(Q)$ the space of maximal connected chains in $Q$. Denote by $E(Q)\subseteq V(Q)$ the compact space of closed, connected subsets of $Q$ (equipped with the Vietoris topology). Fix a compatible metric $d$ on $Q$.

Recall that a closed set $A \sub Q$ is called a \df{Z-set} if for every $\eps > 0$, there exists a continuous map $f \colon Q \to Q \sminus A$ such that for all $x \in Q$, $d(f(x), x) < \eps$. The following homogeneity property is a key fact about Z-sets in the Hilbert cube.

\begin{fact}[\cite{vanMill1989}*{Theorem 6.4.6}]
  \label{f:homeomorphism-extension}
  Let $A,B \sub Q$ be Z-sets. If $f \colon A \to B$ is a homeomorphism, then there exists $g \in \Homeo(Q)$ with $g|_A = f$.
\end{fact}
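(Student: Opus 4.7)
The plan is to prove this classical Z-set unknotting theorem via an extension-and-approximation argument. Since $Q$ is an absolute retract, I would first extend $f \colon A \to B$ to a continuous self-map $F \colon Q \to Q$ with $F|_A = f$. The task then reduces to modifying $F$ off $A$ to make it a homeomorphism, while preserving the boundary condition $F|_A = f$.

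The core step is an approximation lemma: for every $\eps > 0$, there exists a homeomorphism $g$ of $Q$ with $g|_A = f$ and $\sup_{x \in Q} d(g(x), F(x)) < \eps$. To prove this, I would exploit the Z-set hypothesis, which by definition supplies continuous maps $\pi_A \colon Q \to Q \sminus A$ and $\pi_B \colon Q \to Q \sminus B$ arbitrarily close to the identity. These maps provide ``collar'' regions around $A$ and $B$ in which one has enough room to perform a small surgery: one adjusts $F$ on a thin neighborhood of $A$ so that the result is injective off $A$ while still extending $f$. A standard theorem of Anderson (a continuous self-map of $Q$ that can be uniformly approximated by homeomorphisms in a suitable sense is itself a uniform limit of homeomorphisms) then allows one to replace the adjusted map by a genuine homeomorphism within $\eps$ of $F$.

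Given the approximation lemma, I would build the desired global homeomorphism by an infinite-swindle iteration. Construct inductively homeomorphisms $g_n$ of $Q$ with $g_n|_A = f$ and $d(g_n, g_{n+1}) < 2^{-n}$, using the lemma applied to an increasingly fine extension of $f$ on shrinking neighborhoods of $A$, and taking care that the inverse sequence $g_n^{-1}$ is also Cauchy (this is where the symmetric use of $\pi_A$ and $\pi_B$ matters). The uniform limit $g := \lim_n g_n$ is then a homeomorphism of $Q$ satisfying $g|_A = f$.

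The main obstacle is the approximation lemma, where one must simultaneously enforce the rigid condition $g|_A = f$ and keep $g$ globally $\eps$-close to $F$. This is precisely where the Z-set hypothesis does its work: it guarantees that $A$ and $B$ have arbitrarily thin neighborhoods off which $Q$ can be continuously displaced, and the discrepancy produced by the surgery can be absorbed into these collars. Without the Z-set hypothesis the conclusion fails (for instance, take $A = B = Q$ with $f$ any non-identity self-homeomorphism that does not extend trivially).
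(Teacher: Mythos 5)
The paper does not prove this statement: it is imported verbatim as Fact~\ref{f:homeomorphism-extension} from van Mill's book (Theorem~6.4.6), so what you are attempting is a re-proof of a genuinely nontrivial classical theorem of infinite-dimensional topology, and your sketch does not supply one. The structural problem is that your ``approximation lemma'' is not an auxiliary lemma but a strengthening of the theorem itself: a single homeomorphism $g$ of $Q$ with $g|_A=f$ is already the desired conclusion, so the subsequent iteration adds nothing, while the entire difficulty has been pushed into the lemma, whose proof you only gesture at. The sentence ``one adjusts $F$ on a thin neighborhood of $A$ so that the result is injective off $A$ while still extending $f$'' is precisely the hard content: the Z-set condition provides maps $\pi_A,\pi_B$ displacing $Q$ off $A$ and $B$, but it gives no collar structure and no mechanism for such a controlled surgery; and even granting an injective continuous extension $G$ of $f$, injectivity does not yield an element of $\Homeo(Q)$, since $G$ need not be surjective. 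The ``theorem of Anderson'' you invoke is, as stated, a tautology (a map uniformly approximable by homeomorphisms is by definition a uniform limit of them); what is actually needed is machinery of a different kind --- controlled approximation of maps by Z-embeddings, absorption of Z-sets into endfaces of the product structure $Q=[0,1]^{\N}$, and Bing's shrinking criterion or the inductive convergence criterion ensuring that an infinite composition of small homeomorphisms converges to a homeomorphism (uniform limits of homeomorphisms of $Q$ are in general not homeomorphisms, which is exactly why your limiting step in the third paragraph needs a real criterion rather than Cauchyness of $g_n$ and $g_n^{-1}$ alone being asserted). None of this is established, or even correctly cited, in the outline, so the proposal assumes the substance of van Mill's Theorem~6.4.6 rather than proving it.

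Your closing remark is also incorrect: with $A=B=Q$ and $f$ any self-homeomorphism, $g=f$ itself extends $f$, so this is no counterexample to the unrestricted statement. A correct illustration of why the Z-set hypothesis matters: ambient homeomorphisms carry Z-sets to Z-sets, so take $A=Q\times\{0\}$ inside $Q\times[0,1]\cong Q$, which is a Z-set homeomorphic to $Q$, and let $B=Q$ with $f\colon A\to B$ any homeomorphism; an extension $g\in\Homeo(Q)$ would make $Q=g(A)$ a Z-set in itself, which is absurd. In summary, your proposal captures the right general flavour (extend $f$ using that $Q$ is an AR, use the room the Z-set condition provides near $A$ and $B$, and conclude by a convergence argument), but the decisive steps are missing, and the fastest honest route here is the one the paper takes: cite van Mill's Theorem~6.4.6.
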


The following is also well-known.
\begin{prop}
  \label{p:Z-sets-Gdelta}
  The set $\set{A \in E(Q) : A \text{ is a Z-set}}$ is dense $G_\delta$ in $E(Q)$.
\end{prop}
\begin{proof}
  For any continuous map $f\colon Q\to Q$, the set $\{A\in V(Q): A\subseteq Q\setminus \im{f}\}$ is open in $V(Q)$. It follows that the collection of Z-sets is $G_\delta$ in $V(Q)$, so also in $E(Q)$. To show density, let $C\subset Q$ be a continuum and denote $\pi_{n} \colon Q\rightarrow[0,1]^{n}\times\{0\}\times\{0\}\times\cdots$ the projection map. It is clear that $\lim_n \pi_n(C) = C$ and by \cite{vanMill1989}*{Lemma 6.2.3(2)}, $\pi_{n}(C)$ is a Z-set.
\end{proof}

Next we recall several facts about the pseudo-arc. A \df{continuum} is a compact, connected space. An open cover $\set{U_{0},\ldots,U_{n-1}}$ of a continuum is called a \df{chain cover} if for all $i, j < n$, $U_{i}\cap U_{j}\neq\emptyset\iff|i-j|\leq1$. A continuum is \df{chainable} if every open cover admits a refinement that is a chain cover. A continuum is \df{indecomposable} if it is not the union of two proper subcontinua. It is \df{hereditarily indecomposable} if every subcontinuum is indecomposable. The \df{pseudo-arc} $\bP$ is the unique, up to homeomorphism, \df{non-degenerate} (having more than one point), chainable, hereditarily indecomposable continuum \cite{Bing1951}*{Theorem~1}. As hereditary indecomposability is obviously a hereditary property, it is clear that every non-degenerate subcontinuum of the pseudo-arc is again a pseudo-arc.

Another interesting fact about the pseudo-arc is that the map $\cC(\bP) \to \bP$ that associates to a chain its smallest element (which is a point) is a homeomorphism. Indeed, it is obviously continuous, and as by hereditary indecomposability, for any point $x \in \cP$, the set $\set{A \in E(\bP) : x \in A}$ is a chain, it is also injective.

The pseudo-arc has many other remarkable properties some of which we recall below.

\begin{fact}[Bing~\cite{Bing1948}]
  \label{f:pseudo-arc-homog}
  The pseudo-arc is \df{homogeneous}, i.e., its homeomorphism group acts on it transitively.
\end{fact}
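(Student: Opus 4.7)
The plan is to follow Bing's original combinatorial strategy based on chain covers and the equivalence, for chainable continua, between hereditary indecomposability and the existence of arbitrarily fine \emph{crooked} refinements. Recall that a chain cover $\cD = (V_0, \ldots, V_{m-1})$ refining $\cC = (U_0, \ldots, U_{n-1})$ is crooked in $\cC$ if whenever $V_a \sub U_i$ and $V_b \sub U_j$ with $a < b$ and $|i-j| \geq 3$, one can find $a \leq a' < b' \leq b$ with $V_{a'}$ contained in a link of $\cC$ adjacent to $U_j$ and $V_{b'}$ in a link adjacent to $U_i$. The first step I would carry out is to verify this characterization: in $\bP$, every chain cover admits arbitrarily fine crooked refinements.

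Next, fix $p, q \in \bP$. The plan is to construct by mutual induction two sequences of chain covers $(\cC_n)$ and $(\cD_n)$ of $\bP$, each with mesh tending to $0$, together with combinatorial isomorphisms $\sigma_n$ between them that are compatible with the refinements and have the property that $\sigma_n$ sends the link of $\cC_n$ containing $p$ to the link of $\cD_n$ containing $q$. The key input is a swapping lemma: given $\cC$ and two points $x, y$ lying in prescribed links, crookedness lets one produce a refinement $\cC'$ of $\cC$ and a second chain $\cD'$, also refining $\cC$, such that $x$ occupies the $k$-th link of $\cC'$ precisely when $y$ occupies the $k$-th link of $\cD'$. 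The back-and-forth required to alternately extend $(\cC_n)$ and $(\cD_n)$ while preserving $\sigma_n$ is run with a diagonal enumeration so that the meshes go to zero on both sides.

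The homeomorphism $h \colon \bP \to \bP$ with $h(p) = q$ then emerges in the limit: each $x \in \bP$ is pinned down by the nested sequence of closures of links from $\cC_n$ containing it, and the isomorphisms $\sigma_n$ transport this to a nested sequence from $\cD_n$ whose intersection is a single point (using chainability together with mesh control), which I declare to be $h(x)$. Symmetry of the construction provides a two-sided inverse, continuity follows from uniform mesh control, and $h(p) = q$ by design.

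The main obstacle is the swapping lemma at each inductive step: one must arrange $\cC'$ and $\cD'$ to realize the new link-matching for $p$ and $q$ while respecting the previously fixed combinatorial isomorphism. Crookedness is exactly what enables this back-and-forth on the combinatorial chain, but the bookkeeping is delicate, and verifying that the resulting limit map is a homeomorphism (rather than just a well-defined continuous surjection) requires careful choice of the refining permutations. A more modern route would present $\bP$ as a projective Fra\"iss\'e limit in the sense of Irwin--Solecki, from which homogeneity follows by the standard extension property of Fra\"iss\'e limits; this repackages the same combinatorial content into a cleaner formal framework, but the geometric heart of the argument remains the crooked-refinement lemma above.
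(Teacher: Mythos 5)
You should first be aware that the paper contains no proof of this statement: it is imported as a black box with a citation to Bing's 1948 paper, so there is no internal argument to compare yours against. Your outline follows the classical route — Bing's crooked-chain back-and-forth, with the Irwin--Solecki projective Fra\"iss\'e limit as the modern repackaging — and at the level of strategy this is the right (essentially the only known) approach: the characterization of hereditary indecomposability via arbitrarily fine crooked refinements, two sequences of chain covers with mesh tending to zero, combinatorial isomorphisms matching the links containing $p$ and $q$, and a limit homeomorphism.

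That said, as written the proposal has a genuine gap exactly where you flag the ``delicate bookkeeping,'' and it is not a small one. The swapping lemma is not a direct consequence of crookedness as you state it; in Bing's argument the corresponding step is the heart of the proof. What is actually needed is a statement about \emph{patterns}: if $\mathcal{D}$ is crooked in $\mathcal{C}$, then $\mathcal{D}$ can be consolidated (links amalgamated) so as to follow any prescribed pattern in $\mathcal{C}$, and this can be done while sending the link containing $p$ to a prescribed link. It is this universality of crooked refinements that allows the two sequences of chains to be built with combinatorial isomorphisms that commute with the refinement patterns on \emph{both} sides. Your formulation (``$x$ occupies the $k$-th link of $\mathcal{C}'$ precisely when $y$ occupies the $k$-th link of $\mathcal{D}'$'') does not record how $\mathcal{C}'$ and $\mathcal{D}'$ refine the previously constructed chains, and without that compatibility the limit map is not even well defined. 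Similarly, mesh control gives continuity and surjectivity of the limit map, but injectivity requires the same control on the inverse patterns, i.e.\ a genuinely symmetric back-and-forth, which you assert rather than arrange. None of this is unfixable — it is precisely the content of Bing's lemmas on crooked chains, or of the extension property of the projective Fra\"iss\'e limit of finite linear graphs — but for the purposes of this paper the sensible course is what the authors do: cite Bing~1948 (or Lewis's surveys, or Irwin--Solecki) for a result whose complete proof is long, rather than reconstruct it.
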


The following is also well-known.
\begin{fact}[Bing~\cite{Bing1951}]
  \label{f:pseudo-arc-generic}
  The set $\set{A \in E(Q) : A \text{ is homeomorphic to the pseudo-arc}}$ is dense $G_\delta$ in $E(Q)$.
\end{fact}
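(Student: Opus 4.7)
The plan is to establish separately that $\mathcal{P} := \set{A \in E(Q) : A \text{ is homeomorphic to } \bP}$ is $G_\delta$ and that it is dense in $E(Q)$.

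For the $G_\delta$ part, I would use Bing's characterization of the pseudo-arc as a non-degenerate, chainable, hereditarily indecomposable continuum and check that each of these conditions is itself $G_\delta$ in $E(Q)$. Non-degeneracy is open: its complement, the set of singletons, is closed since the diameter function is continuous on $V(Q)$. Chainability can be expressed as $\bigcap_n W_n$, where $W_n$ is the set of continua admitting a finite chain cover by open sets of diameter less than $1/n$; each $W_n$ is open, because if $\set{U_0, \ldots, U_{k-1}}$ is such a chain cover of $K$, it remains a chain cover for every continuum sufficiently close to $K$ in the Vietoris topology. Hereditary indecomposability can be written as $\bigcap_n H_n$, where $H_n$ is the (open) set of $K \in E(Q)$ such that any two subcontinua $A, B \sub K$ of diameter at least $1/n$ that intersect satisfy $A \sub B_{1/n}(B)$ or $B \sub B_{1/n}(A)$.

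For density, the task is to produce a pseudo-arc in every Vietoris-neighborhood of every $C \in E(Q)$. The most direct strategy uses Bing's crooked-chain construction: given $C$ and $\epsilon > 0$, use local path-connectedness of $Q$ to produce a chain cover $\cU = \langle U_0, \ldots, U_{n-1}\rangle$ with $\bigcup \cU \sub B_\epsilon(C)$ and with each $U_i$ meeting $C$; then inductively refine $\cU$ by a sequence of increasingly crooked chains whose meshes tend to zero. The nested intersection is a non-degenerate, chainable, hereditarily indecomposable continuum, hence homeomorphic to $\bP$, contained in $B_\epsilon(C)$, and meeting every member of $\cU$. An alternative that side-steps redoing Bing's construction is to reduce to the known density of pseudo-arcs in the hyperspace of subcontinua of $[0,1]^n$ for $n \geq 2$: using the projections $\pi_n \colon Q \to [0,1]^n \times \set{0} \times \set{0} \times \cdots$ from the proof of Proposition~\ref{p:Z-sets-Gdelta}, one has $\pi_n(C) \to C$ in $V(Q)$, so one can approximate each $\pi_n(C)$ to within $1/n$ by a pseudo-arc lying in $[0,1]^n \times \set{0} \times \set{0} \times \cdots$, producing a sequence of pseudo-arcs in $E(Q)$ converging to $C$.

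The main obstacle is the density part: whichever route one chooses, it ultimately rests on Bing's crooked-chain construction, which is the technical heart of the argument and the reason Fact~\ref{f:pseudo-arc-generic} is nontrivial. The $G_\delta$ part, by contrast, is a routine verification once the correct quantitative open formulations of chainability and hereditary indecomposability have been written down.
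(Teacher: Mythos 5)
Note first that the paper offers no proof of this statement: it is imported as a Fact from Bing's paper (Theorem~2 of \cite{Bing1951}, stated there for $\R^n$ with $n \geq 2$ and for Hilbert space), and the only added content is the remark immediately after the Fact that Bing's proof is local, hence applies to the Hilbert cube and to closed manifolds of dimension at least $2$. Your proposal, by contrast, sketches an actual proof, and its structure is sound. The $G_\delta$ half is the standard argument: non-degeneracy is open (singletons form a closed set since diameter is continuous on $V(Q)$); each $W_n$ is open and $\bigcap_n W_n$ is the set of chainable continua; each $H_n$ is open (the complement is closed: witnesses $A_m, B_m \subseteq K_m$ satisfying the two distance conditions $\geq 1/n$ converge, after passing to subsequences in $V(Q)$, to witnesses for the limit continuum) and $\bigcap_n H_n$ is the set of hereditarily indecomposable continua, since two intersecting incomparable subcontinua would, for small enough $1/n$, violate the condition; Bing's uniqueness theorem (Theorem~1 of \cite{Bing1951}, already quoted in the paper) then identifies the resulting $G_\delta$ set with the set of pseudo-arcs. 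For density you correctly place the real weight on Bing's crooked-chain construction, which is exactly what the cited theorem contains; your alternative via the projections $\pi_n$ is essentially the cleanest way to make the paper's ``the proof is entirely local'' remark precise for $Q$, and it reuses the device of Proposition~\ref{p:Z-sets-Gdelta} (one small caveat: Bing's theorem in $\R^n$ gives a pseudo-arc near $\pi_n(C)$ that need not lie inside the cube; first push $\pi_n(C)$ into the open cube by a scaling close to the identity, or note that the crooked-chain construction can be performed inside the cube).

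One wording in your direct route needs repair. A general $C \in E(Q)$ (a circle, say) admits no chain cover of small mesh --- that is precisely what chainability would assert --- so you cannot ask for ``a chain cover $\mathcal{U}$'' of $C$, and the conditions you do state ($\bigcup \mathcal{U} \subseteq B_\epsilon(C)$, each link meets $C$, the limit continuum meets every link) are not by themselves enough for Vietoris closeness: a short chain of small links hugging one point of $C$ satisfies them, yet its limiting pseudo-arc stays far from most of $C$. What the construction needs is a chain of small mesh whose union lies in $B_\epsilon(C)$ and which passes through a prescribed point of $C \cap V_j$ for each set $V_j$ of the given Vietoris basic neighborhood (equivalently, through an $\epsilon$-net of $C$); this is obtained, as you indicate, from an arc in the open connected set $B_\epsilon(C)$ visiting those points, thickened to a tube. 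With that correction, ``$P \subseteq B_\epsilon(C)$ and $P$ meets every link'' does place $P$ in the given neighborhood, and the rest is Bing's refinement by increasingly crooked chains, exactly the content of the cited theorem.
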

Theorem~2 of \cite{Bing1951} is only stated for $\R^n$ ($n \geq 2$) and the Hilbert space but the proof is entirely local, so it also works for any manifold of dimension at least $2$ and the Hilbert cube.

\begin{lemma}
  \label{l:sub-pseudo-arcs}
  Let $\cU=\set{U_{i}:i<k}$ be a chain cover of the pseudo-arc $\bP$. Then for any $i\leq i'<k$, there exists a subcontinuum $B$ of $\bP$ such that $\set{j:U_{j}\cap B\neq\emptyset}=[i,i']$.
\end{lemma}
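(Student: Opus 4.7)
The plan is to prove this by induction on the number of links $k$ of the chain cover $\cU = \set{U_0, \ldots, U_{k-1}}$. The base cases are handled directly: for $k = 1$ or whenever $[i, i'] = [0, k-1]$, take $B := \bP$; for $k = 2$ and $[i, i'] = \{j\}$, take any singleton $\{p\}$ with $p \in U_j \sminus U_{1-j}$, which is non-empty provided the chain cover is non-redundant (a harmless assumption, as redundant links can be dropped from $\cU$ without affecting the statement).

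For the inductive step, suppose $k \geq 3$ and $[i, i'] \neq [0, k-1]$; by the left-right symmetry of chain covers I may assume $i' < k - 1$. Consider the connected open set $W := U_0 \cup U_1 \cup \cdots \cup U_{k-3}$ (connected because consecutive links intersect), which is disjoint from $U_{k-1}$ since every $j \leq k - 3$ satisfies $|j - (k-1)| \geq 2$. Let $C$ be the connected component of the closed set $\bP \sminus U_{k-1}$ containing $W$, and set $B' := \overline{C}$. Since $U_{k-1}$ is open, $B' \sub \bP \sminus U_{k-1}$, and $B'$ is a non-degenerate subcontinuum of $\bP$ (it contains $W$), hence itself a pseudo-arc by the fact that every non-degenerate subcontinuum of $\bP$ is a pseudo-arc. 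The induced family $\cU' := \set{U_j \cap B' : 0 \leq j \leq k-2}$ forms a chain cover of $B'$ with $k - 1$ links: for $j \leq k - 3$ one has $U_j \sub W \sub B'$, so $U_j \cap B' = U_j$; the last link $U_{k-2} \cap B'$ contains the non-empty overlap $U_{k-3} \cap U_{k-2} \sub U_{k-3} \sub B'$; and the chain property for $\cU'$ is inherited from that of $\cU$ by intersection with $B'$. The inductive hypothesis applied to the pseudo-arc $B'$ with chain cover $\cU'$ and interval $[i, i'] \sub [0, k-2]$ yields a subcontinuum $B \sub B'$ with pattern $[i, i']$ in $\cU'$. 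Since $B \sub B' \sub \bP \sminus U_{k-1}$, the pattern of $B$ in the original cover $\cU$ is also exactly $[i, i']$.

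The main technical obstacle is verifying that the induced collection $\cU'$ is genuinely a chain cover of $B'$: that consecutive links overlap non-trivially inside $B'$ while non-consecutive ones are disjoint there. The non-consecutive disjointness is inherited from $\cU$ via intersection. For the consecutive-overlap property I would argue $U_j \cap U_{j+1} \sub W \sub B'$ for $j \leq k - 4$ via the chain of inclusions $U_j \cap U_{j+1} \sub U_j \sub W$, and note that for $j = k - 3$ the overlap $U_{k-3} \cap U_{k-2}$ still lies in $U_{k-3} \sub W \sub B'$; the critical containment $W \sub C$ underlying both arguments follows from connectedness of $W$ together with $W \sub \bP \sminus U_{k-1}$. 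A secondary delicate point is ensuring the non-degeneracy of $B'$ so that it is genuinely a pseudo-arc (not just a point); this holds because $B' \supseteq W \supseteq U_0$, which is a non-trivial open subset of $\bP$.
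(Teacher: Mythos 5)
Your inductive step breaks down at the very first move, and it cannot be repaired in this form. You assert that $W := U_0 \cup \cdots \cup U_{k-3}$ is ``connected because consecutive links intersect,'' but that argument needs each link $U_j$ to be connected, and links of a chain cover of the pseudo-arc are never connected: since $\bP$ is indecomposable, every proper subcontinuum has empty interior, so a non-empty connected open $U_0$ would have $\overline{U_0} = \bP$, i.e.\ $U_0$ dense, contradicting $U_0 \cap U_2 = \emptyset$ (for $k \geq 3$). Worse, the same fact kills the construction outright: your $B' = \overline{C}$ is a \emph{proper} subcontinuum of $\bP$ (it misses the non-empty open set $U_{k-1}$), so it has empty interior in $\bP$, and therefore it cannot contain the open set $W \supseteq U_0$ as you claim. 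Consequently $U_j \cap B' = U_j$ fails, the consecutive-overlap property of the induced family $\cU'$ is not inherited (the components of $\bP \sminus U_{k-1}$ are small, composant-like sets, and $U_j \cap U_{j+1}$ need not meet the particular component you chose), and the induction collapses. There is also a smaller issue in your $k=2$ base case: dropping ``redundant'' links is not harmless, since the pattern $[i,i']$ refers to the original indexing, and indeed the statement genuinely fails for a redundant chain cover such as $U_0 \subsetneq U_1 = \bP$; the lemma is tacitly applied to the well-behaved covers arising in its use.

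The paper avoids working with the abstract cover altogether: it embeds $\bP$ in the plane together with a nested sequence of chains (tubes) of open balls $\cU^n$ refining the given cover, with closures of links of $\cU^{n+1}$ sitting inside links of $\cU^n$, and then chooses nested intervals of indices $[i_n, i_n']$ compatible with the target pattern; the desired subcontinuum is $B = \bigcap_n \bigcup_{i=i_n}^{i_n'} \cl{U_i^n}$, a nested intersection of compact chain-unions, which is connected and meets exactly the links $U_i, \ldots, U_{i'}$. If you want a proof from scratch, some such appeal to the defining tower of crooked chains (or to known structure theory of subcontinua of $\bP$) is needed; cutting the space along one link and inducting on the number of links is structurally incompatible with hereditary indecomposability.
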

\begin{proof}
  It is well-known (see, for example, \cite{Lewis1999}*{1.7}) that $\bP$ can be embedded in the plane in such a way that there exists a sequence of tubes $\set{\cU^{n}}$ consisting of open balls such that $\set{U\cap \bP:U\in\cU^{0}}$ refines $\cU$ and $\cU^{n+1}$ \df{refines} $\cU^{n}$ (i.e., for every $V\in\cU^{n+1}$, there exists $U\in\cU^{n}$ such that $\cl{V}\sub U$) for every $n$, and we have that $\bP=\bigcap_{n}\bigcup\cU^{n}$.

Write $\cU^{n}=\set{U_{0}^{n},\ldots,U_{k_{n}}^{n}}$. Choose inductively a sequence of intervals $[i_{n},i_{n}']$ with $[i_{-1},i_{-1}']=[i,i']$ and $[i_{n+1},i_{n+1}']$ having the property that $\cl{U_{i_{n+1}}^{n+1}}\sub U_{i_{n}}^{n}$, $\cl{U_{i_{n+1}'}^{n+1}}\sub U_{i_{n}'}^{n}$, and $\bigcup_{i=i_{n+1}}^{i_{n+1}'}\cl{U_{i}^{n+1}}\sub\bigcup_{i=i_{n}}^{i_{n}'}U_{i}^{n}$. Finally set $B=\bigcap_{n}\bigcup_{i=i_{n}}^{i_{n}'}\cl{U_{i}^{n}}$.
\end{proof}

The next proposition describes a generic chain in $\cC(Q)$.
\begin{prop} \label{prop:generic-chain-z-set-pseudo-arcs} The set of chains $c \in \cC(Q)$ such that every $A \in c\sminus\set{Q}$ is a chainable, hereditarily indecomposable continuum (a point or a pseudo-arc) and a Z-set is dense $G_{\delta}$ in $\cC(Q)$.
\end{prop}
\begin{proof}
  Denote by $\cS$ the set of chains in the proposition. By Proposition~\ref{p:Z-sets-Gdelta} and Fact~\ref{f:pseudo-arc-generic}, we can write the set of chainable, hereditarily indecomposable Z-set continua (these include the points) in $E(Q)$ as $\bigcap_{n}O_{n}$ where each $O_{n}\sub E(Q)$ is open, dense and $O_{n}\supseteq O_{n+1}$ for all $n$.

Let $D_{n}\sub E(Q)$ be the complement of the closed set
\[
  \set{A \in E(Q) : \exists x\in Q \text{ with }d(x,A)\geq 1/n},
\]
so that $D_{n}$ is the open set of subcontinua of $Q$ that are $1/n$-dense. Let
\[
  Z_{n} = \set{c \in \cC(Q) : c\sub O_{n}\cup D_{n}}.
\]
We claim that each $Z_{n}$ is open and dense in $\cC(Q)$ and that $\cS=\bigcap_{n}Z_{n}$.

That $Z_{n}$ is open is clear, so we check density. As tubes form a $\pi$-base for $\cC(Q)$, it suffices to check that $Z_n$ intersects every tube. Let $\cU = \set{U_0, \ldots, U_{m-1}}$ be a tube. By prolonging and refining $\cU$ if necessary, we may assume that $O(U_0, \ldots, U_{m-1}) \sub D_n$. As the collection of Z-set pseudo-arcs is dense in $E(Q)$, there exists a pseudo-arc $P \in O(U_0, \ldots, U_{m-1})\cap \bigcap_n O_n$. Construct a sequence $P_{0}\sub P_{1}\sub\cdots\sub P_{m-1}$ of subcontinua of $P$ by inverse induction: take $P_{m-1}=P$ and if $P_{i}$ is defined, apply Lemma~\ref{l:sub-pseudo-arcs} to obtain $P_{i-1}\sub P_{i}$ such that $\set{j:P_{i-1}\cap U_{j}\neq\emptyset}=[0,i-1]$. Finally, we can take $P_{0}=\set{x_{0}}$ to be a point. By the remarks above, $c_0 = \set{K\in E(P) : x_{0}\in K}$ is a chain in $P$ compatible with the tube $\cU$. Now it suffices to extend $c_0$ arbitrarily to a maximal chain to obtain an element of $Z_{n} \cap O(\cU)$; it is in $Z_n$ since subsets of $P$ are $Z$-sets, so in $O_n$, and supersets of $P$ are $1/n$-dense, so in $D_n$.

Finally, we see that $\cS = \bigcap_{n}Z_{n}$. The $\sub$ inclusion is clear. For the other, if $c\in\bigcap_{n}Z_{n}$ and $A \in c \sminus \set{Q}$, then there exists $n_{0}$ such that for all $n\geq n_{0}$, $A\notin D_{n}$, so $A\in\bigcap_{n\geq n_{0}}O_{n}$ and is therefore a hereditarily indecomposable Z-set.
\end{proof}
\begin{remark}
  \label{rem:not-Hilbert-cube}
  Note that Proposition~\ref{prop:generic-chain-z-set-pseudo-arcs} remains true (with the same proof) if we replace the Hilbert cube with any manifold of dimension at least $2$ and we omit the requirement that the members of the chain be Z-sets.
\end{remark}

Next we consider the set of \df{partial chains} $\cP(Q)$. Recall from \cite{Gutman2008} that $\cC(Q)$ is the set of all maximal chains that are connected (as subsets of $V(Q)$). Thus we are led to define the set of partial chains as follows:
\begin{equation*}
  \cP(Q) = \set[\big]{c \in V^2(Q) : c \text{ is a chain}, c \text{ is connected, and } \exists x \in Q \text{ with } \set{x} \in c}.
\end{equation*}
Here, connected refers to $c$ as a subset of $V(Q)$. In other words, $\cP(Q)$ is the set of all initial segments of maximal chains. $\cP(Q)$ is a compact set (all of the three conditions in the definition are closed) and it is naturally a $\Homeo(Q)$-flow. It is not minimal as both $Q$ and $\cC(Q)$ are proper subflows. However, in contrast with $\cC(Q)$, it does have a comeager orbit.
\begin{prop}
  \label{p:com-orbit-partial-chains}
  The set of chains in $\cP(Q)$ whose largest element is a Z-set pseudo-arc is dense $G_\delta$ in $\cP(Q)$ and it constitutes a single $\Homeo(Q)$-orbit.
\end{prop}
\begin{proof}
  Denote by $\cS$ the set described in the proposition. Let the sets $O_n$ be defined as in the proof of Proposition~\ref{prop:generic-chain-z-set-pseudo-arcs}. Then it is clear that
  \begin{equation*}
    \cS = \bigcap_n \set{c \in \cP(Q) : c \sub O_n},
  \end{equation*}
  so $\cS$ is $G_\delta$ and it is also dense by the same argument as in  Proposition~\ref{prop:generic-chain-z-set-pseudo-arcs}.

  It remains to show that all elements of $\cS$ lie in a single orbit. Let $c_1, c_2 \in \cS$ and let $\set{x_1}, P_1, \set{x_2}, P_2$ be the smallest and the largest elements of $c_1, c_2$ respectively. By Fact~\ref{f:pseudo-arc-homog}, there exists a homeomorphism $f \colon P_1 \to P_2$ with $f(x_1) = x_2$. As $P_1$ and $P_2$ are both Z-sets, by Fact~\ref{f:homeomorphism-extension}, $f$ extends to a homeomorphism $g \in \Homeo(Q)$. Finally, as a maximal chain in a pseudo-arc is determined by its smallest element, we must have that $g \cdot c_1 = c_2$.
\end{proof}

\bibliographystyle{alpha}
\bibliography{umf-homeo}
	
\end{document}